\newtheorem{theorem}{Theorem}[section]
\newtheorem{corollary}[theorem]{Corollary}
\newtheorem{lemma}[theorem]{Lemma}
\newtheorem{proposition}[theorem]{Proposition}
\theoremstyle{definition}
\newtheorem{definition}[theorem]{Definition}
\theoremstyle{remark}
\title{A Multiparametric Quon Algebra}
\author{Hery Randriamaro
\thanks{This research was supported by my mother \\
Lot II B 32 bis Faravohitra, 101 Antananarivo, Madagascar \\
e-mail: \texttt{hery.randriamaro@gmail.com}}}
\begin{document}

\maketitle

\begin{abstract}
\noindent The quon algebra is an approach to particle statistics introduced by Greenberg in order to provide a theory in which the Pauli exclusion principle and Bose statistics are violated by a small amount. We generalize these models by introducing a deformation of the quon algebra generated by a collection of operators $\mathtt{a}_i$, $i \in \mathbb{N}^*$ the set of positive integers, on an infinite dimensional module satisfying the $q_{i,j}$-mutator relations $\mathtt{a}_i \mathtt{a}_j^{\dag} - q_{i,j}\, \mathtt{a}_j^{\dag} \mathtt{a}_i = \delta_{i,j}$. The realizability of our model is proved by means of the Aguiar-Mahajan bilinear form on the chambers of hyperplane arrangements. We show that, for suitable values of $q_{i,j}$, the module generated by the particle states obtained by applying combinations of $\mathtt{a}_i$'s and $\mathtt{a}_i^{\dag}$'s to a vacuum state $|0\rangle$ is an indefinite Hilbert module. Furthermore, we refind the extended Zagier's conjecture established independently by Meljanac et al. and by Duchamp et al.

\bigskip 

\noindent \textsl{Keywords}: Quon Algebra, Indefinite Hilbert Module, Hyperplane Arrangement 

\smallskip

\noindent \textsl{MSC Number}: 05E15, 81R10 
\end{abstract}

\section{Introduction}

\noindent Denote by $\mathbb{C}[q_{i,j}]$ the polynomial ring $\mathbb{C}\big[q_{i,j}\ \big|\ i,j \in \mathbb{N}^*\big]$ with variables $q_{i,j}$. The quons are particles whose annihilation and creation operators obey the quon algebra which interpolates between fermions and bosons.

\begin{definition}
By \textbf{multiparametric quon algebra} is meant the free algebra $\mathbf{A}$, equal to $\mathbb{C}[q_{i,j}]\big[\mathtt{a}_i\ \big|\ i \in \mathbb{N}^*\big]$, and subject to the anti-involution $\dag$ exchanging $\mathtt{a}_i$ with $\mathtt{a}_i^{\dag}$ and to the commutation relations   
$$\mathtt{a}_i \mathtt{a}_j^{\dag} = q_{i,j}\, \mathtt{a}_j^{\dag} \mathtt{a}_i + \delta_{i,j},$$
where $\delta_{i,j}$ is the Kronecker delta.
\end{definition}

\noindent The multiparametric quon algebra is a generalization of the deformed quon algebra subject to the restriction $q_{i,j} = \bar{q}_{j,i}$ independently studied by Bozejko and Speicher \cite[§~3]{BoSp}, and by Meljanac and Svrtan \cite[§~1.1]{MeSv}. Their algebra is in turn a generalization of the deformed quon algebra investigated by Speicher subject to the restriction $q_{i,j} = q_{j,i}$ \cite{Sp}. Then, his algebra is a generalization of the quon algebra introduced by Greenberg \cite{Grb2} and studied by Zagier \cite[§~1]{Za} which is subject to the commutation relations $\mathtt{a}_i \mathtt{a}_j^{\dag} = q\, \mathtt{a}_j^{\dag} \mathtt{a}_i + \delta_{i,j}$ obeyed by the annihilation and creation operators of the quon particles, and generating a model of infinite statistics. Finally, the quon algebra is a generalization of the classical Bose and Fermi algebras corresponding to the restrictions $q=1$ and $q=-1$ respectively, as well as of the intermediate case $q=0$ suggested by Hegstrom and investigated by Greenberg \cite{Grb1}.

\smallskip

\noindent In a Fock-like representation, the generators of $\mathbf{A}$ are the linear operators $\mathtt{a}_i, \mathtt{a}_i^{\dag}: \mathbf{V} \rightarrow \mathbf{V}$ on an infinite dimensional $\mathbb{C}[q_{i,j}]$-vector module $\mathbf{V}$ satisfying the commutation relations
$$\mathtt{a}_i \mathtt{a}_j^{\dag} = q_{i,j}\, \mathtt{a}_j^{\dag} \mathtt{a}_i + \delta_{i,j},$$ 
and the relations $$\mathtt{a}_i|0\rangle = 0,$$
where $\mathtt{a}_i^{\dag}$ is the adjoint of $\mathtt{a}_i$, and $|0\rangle$ is a nonzero distinguished vector of $\mathbf{V}$. The $\mathtt{a}_i$'s are the annihilation operators and the $\mathtt{a}_i^{\dag}$'s the creation operators.

\smallskip

\noindent Define the $q_{i,j}$-conjugate $\tilde{P}$ of a monomial $\displaystyle P = \mu \prod_{u \in [n]} q_{i_u, j_u} \in \mathbb{C}[q_{i,j}]$, where $\mu \in \mathbb{C}$, by
$$\tilde{P} := \tilde{\mu} \prod_{s \in [n]} \tilde{q}_{i_s, j_s} \quad \text{with} \quad \tilde{\mu} = \bar{\mu},\ \text{and}\ \tilde{q}_{i_s, j_s} = q_{j_s, i_s},$$
and the $q_{i,j}$-conjugate of a monomial sum $Q = P_1 + \dots + P_k \in \mathbb{C}[q_{i,j}]$ by $\tilde{Q} = \tilde{P}_1 + \dots +\tilde{P}_k$.

\begin{definition}
An \textbf{indefinite inner product} on $\mathbf{V}$ is a map $(.,.): \mathbf{V} \times \mathbf{V} \rightarrow \mathbb{C}[q_{i,j}]$ such that, for $\mu \in \mathbb{C}$, and $\mathtt{u}, \mathtt{v}, \mathtt{w} \in \mathbf{V}$, we have
\begin{itemize}
\item $(\mu \mathtt{u}, \mathtt{v}) = \mu(\mathtt{u}, \mathtt{v})$ and $(\mathtt{u} + \mathtt{v}, \mathtt{w}) = (\mathtt{u}, \mathtt{w}) + (\mathtt{v}, \mathtt{w})$,
\item $(\mathtt{u}, \mathtt{v}) = \widetilde{(\mathtt{v}, \mathtt{u})}$,
\item and, if $\mathtt{u} \neq 0$, $(\mathtt{u}, \mathtt{u}) \in \mathbb{C}[q_{i,j}] \setminus \{0\}$.
\end{itemize}
\end{definition}

\noindent Let $\mathbf{H}$ be the submodule of $\mathbf{V}$ generated by the particle states obtained by applying combinations of $\mathtt{a}_i$'s and $\mathtt{a}_i^{\dag}$'s to $|0\rangle$, that is $\mathbf{H} := \big\{\mathtt{a} |0\rangle\ \big|\ \mathtt{a} \in \mathbf{A}\big\}$. The aim of this article is to prove the realizability of that model through the following theorem.

\begin{theorem} \label{ThMa}
Under the condition $|q_{i,j}| < 1$, the module $\mathbf{H}$ is an indefinite Hilbert module for the map $(.,.): \mathbf{H} \times \mathbf{H} \rightarrow \mathbb{C}[q_{i,j}]$ defined, for $\mu, \nu \in \mathbb{C}[q_{i,j}]$, and $\mathtt{a}, \mathtt{b} \in \mathbf{A}$, by
$$\big(\mu\mathtt{a}|0 \rangle,\, \nu\mathtt{b}|0 \rangle\big) := \mu \tilde{\nu} \, \langle 0| \mathtt{a} \, \mathtt{b}^{\dag} |0 \rangle  \quad \text{with} \quad \langle 0|0 \rangle = 1,$$
and where the usual bra-ket product $\langle 0| \mathtt{a} \, \mathtt{b}^{\dag} |0 \rangle$ is subject to the defining relations of $\mathbf{A}$.
\end{theorem}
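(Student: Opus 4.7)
The overall plan is to verify the three axioms of Definition 1.2 for the pairing $(\cdot,\cdot)$. Linearity in the first argument is immediate from the formula $(\mu \mathtt{a}|0\rangle,\nu \mathtt{b}|0\rangle) = \mu\tilde\nu\,\langle 0|\mathtt{a}\mathtt{b}^{\dag}|0\rangle$. For the conjugate symmetry $(\mathtt{u},\mathtt{v})=\widetilde{(\mathtt{v},\mathtt{u})}$, I would apply $\dag$ to the $q_{i,j}$-mutator relation $\mathtt{a}_i\mathtt{a}_j^{\dag} - q_{i,j}\mathtt{a}_j^{\dag}\mathtt{a}_i = \delta_{i,j}$ and observe that, after relabelling, it yields the $q_{j,i}$-mutator relation. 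Consequently the normal-ordered expansion of $\langle 0|\mathtt{b}\mathtt{a}^{\dag}|0\rangle$ is obtained from that of $\langle 0|\mathtt{a}\mathtt{b}^{\dag}|0\rangle$ by replacing every $q_{i,j}$ with $q_{j,i}$, which is precisely the $q_{i,j}$-conjugation defined prior to the theorem. Well-definedness on $\mathbf{H}$ is ensured by the fact that the bra-ket product is computed modulo the defining relations of $\mathbf{A}$.

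Next, I would reduce the non-degeneracy statement to a Gram-matrix problem. Because $\mathtt{a}_i|0\rangle=0$, every vector of $\mathbf{H}$ can be written as a $\mathbb{C}[q_{i,j}]$-linear combination of monomials $\mathtt{a}_{i_1}^{\dag}\cdots\mathtt{a}_{i_n}^{\dag}|0\rangle$, giving a grading $\mathbf{H}=\bigoplus_{n\geq 0}\mathbf{H}_n$ by particle number. The pairing vanishes on $\mathbf{H}_m\times\mathbf{H}_n$ when $m\neq n$, so it suffices to prove that the Gram matrix $M_n$ on each sector is non-degenerate. Computing $\langle 0|\mathtt{a}_{j_1}\cdots\mathtt{a}_{j_n}\mathtt{a}_{i_n}^{\dag}\cdots\mathtt{a}_{i_1}^{\dag}|0\rangle$ by iterative application of the $q_{i,j}$-mutator and of $\mathtt{a}_i|0\rangle=0$ expresses it as a sum, indexed by bijections $\sigma$ matching the multi-indices $(j_\bullet)$ and $(i_\bullet)$, of products of parameters $q_{p,r}$ attached to the inversions of $\sigma$.

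The central task, and the anticipated main obstacle, is to show non-degeneracy of $M_n$ in the regime $|q_{i,j}|<1$. The plan, as announced in the abstract, is to identify $M_n$ with the Aguiar--Mahajan bilinear form on the chambers of the braid arrangement of type $A_{n-1}$: chambers correspond to permutations $\sigma\in\mathfrak{S}_n$, and their pairing is a product over the hyperplanes separating two chambers, weighted by the appropriate parameter. The delicate step will be the combinatorial dictionary: I must verify that the inversion-indexed weight appearing in the normal-ordering expansion of $\langle 0|\mathtt{a}_{j_1}\cdots\mathtt{a}_{j_n}\mathtt{a}_{i_n}^{\dag}\cdots\mathtt{a}_{i_1}^{\dag}|0\rangle$ coincides, $q_{i,j}$ by $q_{i,j}$, with the walls-crossed weight of the Aguiar--Mahajan chamber form. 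Once this identification is established, the non-degeneracy theorem of Aguiar--Mahajan on chamber forms of real hyperplane arrangements, applied in the parameter range $|q_{i,j}|<1$, immediately gives $(\mathtt{u},\mathtt{u})\neq 0$ for every nonzero $\mathtt{u}\in\mathbf{H}_n$, completing the verification that $(\cdot,\cdot)$ endows $\mathbf{H}$ with the structure of an indefinite Hilbert module.
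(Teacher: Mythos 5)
Your overall strategy coincides with the paper's: decompose by particle number, recognize the Gram matrix on each sector as a chamber form of the braid arrangement, and invoke the Aguiar--Mahajan determinant formula (Theorem~8.11 of Aguiar--Mahajan combined with Randriamaro's identification of the flats with nonzero multiplicity) to conclude nondegeneracy when $|q_{i,j}|<1$. The preliminary verifications (linearity, $q_{i,j}$-conjugate symmetry via applying $\dag$ to the mutator relation, and the vanishing of the pairing across different particle numbers coming from $\mathtt{a}_i|0\rangle=0$) are correct and correspond to the roles played by Lemmas~\ref{H} and~\ref{zero}, with Lemma~\ref{Ch} supplying the ``combinatorial dictionary'' you anticipated.

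There is, however, a genuine gap. Your identification of the Gram matrix with the chamber form $\bigl(\mathrm{v}_n(C_\sigma,C_\tau)\bigr)_{\sigma,\tau\in\mathfrak{S}_n}$ is only literally correct when the $n$ particle labels are pairwise distinct, i.e.\ for the blocks $\mathbf{M}_J$ with $J\in\binom{\mathbb{N}^*}{n}$. The full decomposition is over \emph{multisets} $I$, and when $I$ has repeated entries the block $\mathbf{M}_I$ is indexed by $\mathfrak{S}_I$, which has fewer elements than $\mathfrak{S}_n$; moreover its entries are \emph{sums} of several $\mathrm{v}_n$-values, indexed by a coset of the Young subgroup $\mathfrak{S}_\lambda$ (this is exactly the content of Lemma~\ref{CoSym}). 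So $\mathbf{M}_I$ is not itself the Aguiar--Mahajan chamber form, and neither the determinant formula of Proposition~\ref{Var} nor a direct nondegeneracy statement for chamber forms applies to it. The paper closes this gap via Proposition~\ref{Rep}: $\gamma_n$ preserves the submodule $M_{\mathcal{A}_n}^{\lambda}$ spanned by the symmetrized chambers $C_{\dot{\sigma}}=\sum_{\sigma\in\mathrm{p}^{-1}(\dot{\sigma})}C_\sigma$, and $\mathbf{M}_I$ is precisely the matrix of $\gamma_n$ restricted to this invariant submodule; since $\gamma_n$ is invertible on all of $M_{\mathcal{A}_n}$ for $|q_{i,j}|<1$, so is its restriction. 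Without this step (or an equivalent descent argument), your proof establishes nonsingularity only on the multiplicity-free sectors, which is insufficient to conclude that the pairing is an indefinite inner product on all of $\mathbf{H}$.
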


\noindent The indefinite inner product of Theorem \ref{ThMa} becomes an inner product when the matrix representing $(.,.)$ is positively diagonalizable. Theorem \ref{ThMa} is particularly a generalization of the realizability of the deformed quon algebra model for $q_{i,j} = \bar{q}_{j,i}$ established independently by Bozejko and Speicher \cite[Corollary~3.2]{BoSp}, and by Meljanac and Svrtan \cite[Theorem~1.9.4]{MeSv}, which in turn is a generalization of the realizability of the deformed quon algebra model for $q_{i,j} = q_{j,i}$ established by Speicher \cite[Corollary]{Sp}, which finally is a generalization of the realizability of the quon algebra model established by Zagier \cite[Theorem~1]{Za}.

\noindent To prove Theorem \ref{ThMa}, we first show with Lemma \ref{H} that
$$\mathbf{B} := \big\{\mathtt{a}_{i_1}^{\dag} \dots \mathtt{a}_{i_n}^{\dag}|0\rangle\ \big|\ (i_1, \dots, i_n) \in (\mathbb{N}^*)^n,\, n \in \mathbb{N}\big\}$$ is a basis of $\mathbf{H}$, so that we can assume that $\displaystyle \mathbf{H} = \Big\{ \sum_{i=1}^n \mu_i \mathtt{b}_i\ \Big|\ n \in \mathbb{N}^*,\, \mu_i \in \mathbb{C}[q_{i,j}],\, \mathtt{b}_i \in \mathbf{B}\Big\}$.

\noindent The infinite matrix associated to the map of Theorem \ref{ThMa} is $\mathbf{M} := \big((\mathtt{b},\mathtt{a})\big)_{\mathtt{a},\mathtt{b} \in \mathbf{B}}$.

\noindent Let $\displaystyle \left(\!\!{\mathbb{N}^* \choose n}\!\!\right)$ be the set of multisets of $n$ elements in $\mathbb{N}^*$. We prove with Lemma \ref{zero} that
$$\mathbf{M} = \bigoplus_{n \in \mathbb{N}^*} \bigoplus_{\displaystyle I \in \left(\!\!{\mathbb{N}^* \choose n}\!\!\right)} \mathbf{M}_I \quad \text{with} \quad \mathbf{M}_I = \Big( \langle 0|\, \mathtt{a}_{\dot{\tau}(n)} \dots \mathtt{a}_{\dot{\tau}(1)} \, \mathtt{a}_{\dot{\sigma}(1)}^{\dag} \dots \mathtt{a}_{\dot{\sigma}(n)}^{\dag} \,|0 \rangle \Big)_{\dot{\sigma}, \dot{\tau} \in \mathfrak{S}_I},$$
where $\mathfrak{S}_I$ is the permutation set of the multiset $I$. For example,
$$\mathbf{M}_{[3]} = \begin{pmatrix}
1 & q_{3,2} & q_{2,1} & q_{2,1} q_{3,1} & q_{3,1} q_{3,2} & q_{3,1} q_{2,1} q_{3,2} \\
q_{2,3} & 1 & q_{2,1} q_{2,3} & q_{2,1} q_{3,1} q_{2,3} & q_{3,1} & q_{3,1} q_{2,1} \\
q_{1,2} & q_{1,2} q_{3,2} & 1 & q_{3,1} & q_{3,2} q_{1,2} q_{3,1} & q_{3,2} q_{3,1} \\
q_{1,2} q_{1,3} & q_{1,2} q_{1,3} q_{3,2} & q_{1,3} & 1 & q_{3,2} q_{1,2} & q_{3,2} \\
q_{1,3} q_{2,3} & q_{1,3} & q_{2,3} q_{2,1} q_{1,3} & q_{2,3} q_{2,1} & 1 & q_{2,1} \\
q_{1,3} q_{1,2} q_{2,3} & q_{1,3} q_{1,2} & q_{2,3} q_{1,3} & q_{2,3} & q_{1,2} & 1
\end{pmatrix}.$$

\noindent Proposition \ref{Var} and Lemma \ref{Ch} permits us to deduce that, if $\displaystyle J \in \binom{\mathbb{N}^*}{n} \subseteq \left(\!\!{\mathbb{N}^* \choose n}\!\!\right)$, then
$$\det \mathbf{M}_J = \prod_{\substack{K \in 2^{J} \\ \#K \, \geq 2}} \Big(1- \prod_{\{s,t\} \in \binom{K}{2}} q_{s,t} q_{t,s}\Big)^{(\#K\,- 2)!\,(n- \, \#K \, +1)!}.$$
For example, $\det \mathbf{M}_{[3]} = (1 - q_{1,2} q_{2,1})^2 \, (1 - q_{1,3} q_{3,1})^2 \, (1 - q_{2,3} q_{3,2})^2 \, (1 - q_{1,2} q_{2,1} q_{1,3} q_{3,1} q_{2,3} q_{3,2})$.

\noindent That determinant was independently computed by Meljanac and Svrtan for the specialization $q_{i,j} = \bar{q}_{j,i}$ \cite[Theorem~1.9.2]{MeSv}, by Duchamp et al. for the specialization $q_{i,j} = q_{j,i}$ \cite[§ 6.4.1]{DuEtAl}, and by Zagier for the specialization $q_{i,j} = q_{j,i} = q$ \cite[Theorem~2]{Za}.

\noindent Moreover, consider the multiset $\displaystyle I = \{\overbrace{i_1, \dots, i_1}^{\text{$p_1$ times}}, \overbrace{i_2, \dots, i_2}^{\text{$p_2$ times}}, \dots, \overbrace{k, \dots, k}^{\text{$p_k$ times}}\} \in \left(\!\!{\mathbb{N}^* \choose n}\!\!\right)$. For $s \in [n]$, let $\dot{s} := i_j$ if $s \in [p_j + p_{j-1} + \dots + p_1] \setminus [p_{j-1} + \dots + p_1]$. Suppose that the generators of $\mathbf{A}$ satisfy the commutation relations $\mathtt{a}_s \mathtt{a}_t^{\dag} = q_{\dot{s},\dot{t}}\, \mathtt{a}_t^{\dag} \mathtt{a}_s + \delta_{s,t}$. In that case, if we regard $\mathbf{M}_{[n]}$ as the matrix representing a linear map $\alpha: M \rightarrow M$ on a module $M$, then we prove with Proposition~\ref{Rep} and Lemma~\ref{CoSym} that $\mathbf{M}_I$ is the matrix representing $\alpha$ restricted to a submodule $N \subseteq M$ such that $\alpha(N) = N$. Therefore, we can infer that, for every $\displaystyle I \in \left(\!\!{\mathbb{N}^* \choose n}\!\!\right)$, $\mathbf{M}_I$ is nonsingular for $|q_{i,j}| < 1$, $i,j \in \mathbb{N}^*$.

\smallskip

\noindent When, for special values of the $q_{i,j}$'s, $\mathbf{M}_{[n]}$ is diagonalizable, then $\mathbf{M}_I$ becomes positive definite. Indeed, as $\mathbf{M}_I$ is the identity matrix if $q_{i,j}=0$, for every $i,j \in \mathbb{N}^*$, we deduce by continuity that $\mathbf{M}_I$ is positive definite. For these suitable values of $q_{i,j}$, $\mathbf{M}$ becomes a positive definite matrix or, in other terms, the map in Theorem~\ref{ThMa} becomes an inner product on $\mathbf{H}$. It is the case of the algebras investigated by Meljanac and Svrtan, and Zagier since, with their models, $\mathbf{M}_{[n]}$ is a hermitian matrix, that is consequently diagonalizable.

\smallskip

\noindent Finally, we provide another proof of the extended Zagier's conjecture in Section \ref{SecZ}.

\begin{proposition} \label{CoZa}
Let $n \in \mathbb{N}^*$, and assume that the generators of $\mathbf{A}$ satisfy the commutation relations $\mathtt{a}_s \mathtt{a}_t^{\dag} = q\, \mathtt{a}_t^{\dag} \mathtt{a}_s + \delta_{s,t}$. Then, each entry of $\mathbf{M}_{[n]}^{-1}$ is an element of $\displaystyle \frac{\mathbb{C}[q]}{\displaystyle \prod_{i \in [n-1]}\big(1 - q^{i^2 + i}\big)^{n-i}}$.
\end{proposition}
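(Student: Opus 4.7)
The plan is to pass from matrices to the group algebra of $\mathfrak S_n$ and exploit a coset factorisation of the relevant element. Under the specialisation $q_{s,t}=q$, each entry of $\mathbf M_{[n]}$ has the form $q^{\ell(\pi)}$ for a permutation $\pi$ depending on the row and column indices, and $\mathbf M_{[n]}$ coincides (after the identification of row/column indices with elements of $\mathfrak S_n$) with the matrix of the regular action of
$$a_n\ :=\ \sum_{\pi\in\mathfrak S_n}q^{\ell(\pi)}\,\pi\ \in\ \mathbb C[q][\mathfrak S_n]$$
on $\mathbb C[\mathfrak S_n]$. Hence every entry of $\mathbf M_{[n]}^{-1}$ is, up to a relabelling, a coefficient of $a_n^{-1}\in\mathbb C(q)[\mathfrak S_n]$, and the task reduces to bounding the denominators of those coefficients.

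The length-additive decomposition $\mathfrak S_k=\bigsqcup_{j=1}^{k}\mathfrak S_{k-1}\cdot(s_j s_{j+1}\cdots s_{k-1})$ gives the inductive identity $a_k=a_{k-1}\beta_k$ with
$$\beta_k\ :=\ \sum_{j=1}^{k}q^{k-j}\,s_j s_{j+1}\cdots s_{k-1}\ \in\ \mathbb C[q][\mathfrak S_k],$$
so by iteration $a_n=\beta_2\beta_3\cdots\beta_n$ and therefore $a_n^{-1}=\beta_n^{-1}\beta_{n-1}^{-1}\cdots\beta_2^{-1}$.

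The core technical claim is that $\beta_k$ is invertible in $\mathbb C(q)[\mathfrak S_k]$ with denominator dividing $\prod_{j=2}^{k}(1-q^{j(j-1)})$. I would establish this by exhibiting an explicit $\gamma_k\in\mathbb C[q][\mathfrak S_k]$ satisfying $\beta_k\gamma_k=\prod_{j=2}^{k}(1-q^{j(j-1)})$: the base case $\beta_2(1-qs_1)=1-q^2$ is immediate, and a direct calculation for $k=3$ already produces a $\gamma_3\in\mathbb Z[q][\mathfrak S_3]$ with $\beta_3\gamma_3=(1-q^2)(1-q^6)$. For general $k$, two routes are plausible: \emph{(i)} a coset-indexed recursion for $\gamma_k$ paralleling the construction of $\beta_k$, with verification of the product identity carried out in $\mathbb C[q][\mathfrak S_k]$ via systematic use of the braid relations; or \emph{(ii)} a block-by-block analysis through the Wedderburn decomposition $\mathbb C[\mathfrak S_k]=\bigoplus_{\lambda\vdash k}\mathrm{End}(V_\lambda)$, showing that every irreducible factor of $\det(\beta_k|_{V_\lambda})$ lies in $\{1-q^{j(j-1)}\mid 2\le j\le k\}$ (cross-referencing the determinant formula for $\mathbf M_{[k]}$ already available) and that the resulting $\mathrm{lcm}$ over $\lambda\vdash k$ equals $\prod_{j=2}^{k}(1-q^{j(j-1)})$.

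Granting this, the telescoping product $a_n^{-1}=\beta_n^{-1}\cdots\beta_2^{-1}$ has denominator dividing
$$\prod_{k=2}^{n}\prod_{j=2}^{k}(1-q^{j(j-1)})\ =\ \prod_{j=2}^{n}(1-q^{j(j-1)})^{\,n-j+1}\ =\ \prod_{i=1}^{n-1}(1-q^{i^2+i})^{\,n-i},$$
which is precisely the claimed bound. The chief obstacle is the explicit inversion of $\beta_k$: the small cases already show no obviously transparent closed form for the coefficients of $\gamma_k$, so route~\emph{(i)} will require a carefully chosen inductive ansatz to keep the braid-algebra expansion under control, whereas route~\emph{(ii)} rests on the nontrivial representation-theoretic statement that the factors $(1-q^{j(j-1)})$ exhaust the irreducible factors of $\det(\beta_k|_{V_\lambda})$ for every partition $\lambda$.
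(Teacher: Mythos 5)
Your route is genuinely different from the paper's. The paper proves a general denominator bound for $\gamma^{-1}$ over an arbitrary hyperplane arrangement (Proposition~\ref{PrExZ}, adapted from Aguiar--Mahajan's Proposition~8.13) and then reads off the claimed polynomial by specialising to $\mathcal A_n$ with $q_{i,j}=q$ and analysing the faces lying between the central face $O_n$ and a chamber. You instead pass to the group algebra: the identification of $\mathbf M_{[n]}$ with the regular-representation matrix of $a_n=\sum_{\pi\in\mathfrak S_n}q^{\ell(\pi)}\pi$, the length-additive coset factorisation $a_n=\beta_2\cdots\beta_n$, and the telescoping $\prod_{k=2}^n\prod_{j=2}^k(1-q^{j(j-1)})=\prod_{i=1}^{n-1}(1-q^{i^2+i})^{n-i}$ are all correct; this is the Zagier and Duchamp--Klyachko--Krob--Thibon line of attack, which the paper itself cites as an independent derivation of the proposition. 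What the hyperplane-arrangement approach buys is a uniform inversion formula (Proposition~\ref{PrExZ}) valid for all $q_{i,j}$ and all arrangements, from which the $q_{i,j}=q$ case falls out as a specialisation; your route is specific to the $\mathfrak S_n$ group algebra but, if completed, would be closer in spirit to the original conjecture.

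There is, however, a genuine gap, and you flag it yourself: the claim that $\beta_k^{-1}$ has denominator dividing $\prod_{j=2}^{k}(1-q^{j(j-1)})$ is asserted but never established. This is not a peripheral lemma --- the coset factorisation and the telescoping arithmetic are routine, and essentially all of the content of the extended Zagier conjecture is concentrated in that one bound. Route~(i) requires exhibiting a $\gamma_k\in\mathbb C[q][\mathfrak S_k]$ with $\beta_k\gamma_k=\prod_{j=2}^{k}(1-q^{j(j-1)})$, and you have this only for $k\le 3$. Route~(ii) requires proving that each block determinant $\det(\beta_k|_{V_\lambda})$ is a product of \emph{distinct} factors $1-q^{j(j-1)}$, $2\le j\le k$; this is exactly the nontrivial representation-theoretic statement of Duchamp et al.'s Propositions~4.6 and~4.9. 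Cross-referencing the known formula for $\det\mathbf M_{[k]}$ only pins down the weighted product $\prod_{\lambda\vdash k}\det(\beta_k|_{V_\lambda})^{\dim V_\lambda}$, not the individual blocks, so the multiplicity-one (squarefreeness) statement you need is a genuinely new input that must be supplied. As written, the proposal is a sound and well-motivated plan, not a proof.
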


\noindent The extended Zagier's conjecture was first established by Meljanac and Svrtan \cite[Corollary~2.2.8]{MeSv} who disproved Zagier's conjecture by using the algorithm of \cite[Proposition~2.2.15]{MeSv} for $n=8$. One deduces also Proposition \ref{CoZa} from the study of the representation of the permutation group made by Duchamp et al. \cite[Proposition~4.6, 4.9]{DuEtAl}.

\section{Hyperplane Arrangements}  \label{SeHy}

\noindent We establish two results we need concerning the hyperplane arrangement associated to the permutation group of $n$ elements in order to prove Theorem \ref{ThMa}.

\smallskip

\noindent Recall that a \textbf{hyperplane} in the space $\mathbb{R}^n$ is a $(n-1)$-dimensional linear subspace, and a \textbf{hyperplane arrangement} is a finite set of hyperplanes. For a hyperplane $H$, denote its two associated open half-spaces by $H^+$ and $H^-$, and let $H^0 := H$. A \textbf{face} of a hyperplane arrangement $\mathcal{A}$ is a subset of $\mathbb{R}^n$ having the form $$F := \bigcap_{H \in \mathcal{A}} H^{\epsilon_H(F)} \quad \text{with} \quad \epsilon_H(F) \in \{+,0,-\}.$$ 

\noindent A \textbf{chamber} of $\mathcal{A}$ is a face $C \in F_{\mathcal{A}}$ such that, for every $H \in \mathcal{A}$, $\epsilon_H(F) \neq 0$. Denote the set formed by the chambers of $\mathcal{A}$ by $C_{\mathcal{A}}$. Assign a variable $h_H^{\varepsilon}$, $\varepsilon \in \{+,-\}$, to every half-space $H^{\varepsilon}$. We work with the polynomial ring $R_{\mathcal{A}} := \mathbb{Z}\big[h_H^{\varepsilon}\ \big|\ H \in \mathcal{A},\, \varepsilon \in \{+,-\}\big]$, and the module of $R_{\mathcal{A}}$-linear combinations of chambers $\displaystyle M_{\mathcal{A}} := \Big\{\sum_{C \in C_{\mathcal{A}}} x_C C\ \Big|\ x_C \in R_{\mathcal{A}}\Big\}$.

\noindent For $C,D \in C_{\mathcal{A}}$, let $H_{C,D}$ be the set of half-spaces $\big\{H^{\epsilon_H(C)}\ \big|\ H \in \mathcal{A},\, \epsilon_H(C) = - \epsilon_H(D)\big\}$.

\noindent The \textbf{Aguiar-Mahajan bilinear form} $\mathrm{v}: M_{\mathcal{A}} \times M_{\mathcal{A}} \rightarrow R_{\mathcal{A}}$ is defined, for $C,D \in C_{\mathcal{A}}$, by 
$$\mathrm{v}(C,C) = 1 \quad \text{and} \quad \mathrm{v}(C,D) = \prod_{H^{\varepsilon} \in H_{C,D}} h_H^{\varepsilon}\,\ \text{if}\,\ C \neq D.$$

\noindent From $\mathrm{v}$ is defined the linear map $\gamma: M_{\mathcal{A}} \rightarrow M_{\mathcal{A}}$, for $D \in C_{\mathcal{A}}$, by $\displaystyle \gamma(D) := \sum_{C \in C_{\mathcal{A}}} \mathrm{v}(D,C)\, C.$ 

\noindent Let $x = (x_1, \dots, x_n)$ be a variable of $\mathbb{R}^n$. We precisely investigate the hyperplane arrangement associated to the permutation group $\mathfrak{S}_n$ of $n$ elements defined by
$$\mathcal{A}_n := \{H_{i,j}\ |\ i,j \in [n],\, i<j\} \quad \text{with} \quad H_{i,j} = \{x \in \mathbb{R}^n\ |\ x_i = x_j\}.$$

\noindent The set formed by the chambers of $\mathcal{A}_n$ is
$$C_{\mathcal{A}_n} = \{C_{\sigma}\ |\ \sigma \in \mathfrak{S}_n\} \quad \text{with} \quad C_{\sigma} := \{x \in \mathbb{R}^n\ |\ x_{\sigma(1)} < x_{\sigma(2)} < \dots < x_{\sigma(n)}\}.$$

\noindent For $i,j \in [n]$ with $i \neq j$, assign the variable $q_{i,j}$ to the half-space $\{x \in \mathbb{R}^n\ |\ x_i < x_j\}$. On $\mathcal{A}_n$, the ring $R_{\mathcal{A}_n} := \mathbb{Z}\big[q_{i,j}\ \big|\ i,j \in [n]\big]$ and the module $\displaystyle M_{\mathcal{A}_n} := \Big\{\sum_{\sigma \in \mathfrak{S}_n} x_{\sigma} C_{\sigma}\ \Big|\ x_{\sigma} \in R_{\mathcal{A}_n}\Big\}$ are considered. The Aguiar-Mahajan bilinear form becomes $\mathrm{v}_n: M_{\mathcal{A}_n} \times M_{\mathcal{A}_n} \rightarrow R_{\mathcal{A}_n}$ defined, for $C_{\sigma},C_{\tau} \in C_{\mathcal{A}_n}$, by
$$\mathrm{v}_n(C_{\sigma},C_{\tau}) = \prod_{\substack{\{i,j\} \, \in \, \binom{[n]}{2} \\ i \, < \, j \\ \tau^{-1} \circ \sigma(i) \, > \, \tau^{-1} \circ \sigma(j)}} q_{\sigma(i),\sigma(j)},$$
and $\gamma$ the linear map $\gamma_n: M_{\mathcal{A}_n} \rightarrow M_{\mathcal{A}_n}$ defined, for $C_{\tau} \in C_{\mathcal{A}_n}$, by
$$\gamma_n(C_{\tau}) := \sum_{\sigma \in \mathfrak{S}_n} \mathrm{v}_n(C_{\tau},C_{\sigma})\, C_{\sigma}.$$

\begin{proposition} \label{Var}
For an integer $n \geq 2$, we have $$\det \gamma_n = \prod_{\substack{I \in 2^{[n]} \\ \#I \, \geq 2}} \Big(1- \prod_{\{i,j\} \in \binom{I}{2}} q_{i,j} q_{j,i}\Big)^{(\#I\,- 2)!\,(n- \, \#I \, +1)!}.$$
\end{proposition}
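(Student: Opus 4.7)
The plan is to apply the determinant formula of Aguiar--Mahajan for their bilinear form $\mathrm{v}$ on a hyperplane arrangement, which generalizes Varchenko's classical formula to two-sided (half-space) weightings on hyperplanes. In general shape, it gives
$$\det \gamma \; = \; \prod_X \Big(1 - \prod_{H \supseteq X} h_H^{+} h_H^{-}\Big)^{m(X)},$$
with $X$ running over the flats of $\mathcal{A}$, and multiplicity $m(X)$ vanishing whenever the localization $\mathcal{A}_X$ (the subarrangement of hyperplanes containing $X$) is reducible, and otherwise equal to $\beta(\mathcal{A}_X) \cdot c(\mathcal{A}^X)$, i.e.\ the beta invariant of the localization times the number of chambers of the restriction $\mathcal{A}^X$. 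I would invoke this formula as a black box (citing Aguiar--Mahajan, or reproving it via their theory of Lie/cohomology modules of an arrangement) and then specialize to $\mathcal{A}_n$.

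Specializing: the flats of $\mathcal{A}_n$ are indexed by set partitions of $[n]$, with the flat $X_\pi$ attached to $\pi = \{B_1, \dots, B_k\}$ cut out by $x_i = x_j$ whenever $i$ and $j$ lie in the same block. Its localization factors as the product $\mathcal{A}_{B_1} \times \cdots \times \mathcal{A}_{B_k}$, which is irreducible iff at most one block has cardinality $\geq 2$. Consequently only partitions with a unique non-singleton block $I \subseteq [n]$, $\#I \geq 2$, contribute, and the product over flats collapses to one over such subsets. For each $X_I$, the hyperplanes through it are precisely the $H_{i,j}$ with $\{i,j\} \subseteq I$, so the base equals
$$1 - \prod_{\{i,j\} \in \binom{I}{2}} q_{i,j} q_{j,i},$$
matching the statement.

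It remains to evaluate $m(X_I) = \beta(\mathcal{A}_{X_I}) \cdot c(\mathcal{A}^{X_I})$. The localization $\mathcal{A}_{X_I}$ is the braid arrangement on $\#I$ letters, whose characteristic polynomial $(t-1)(t-2)\cdots(t-\#I+1)$ yields $\beta = (\#I - 2)!$; this can be checked by direct computation or cited. The restriction $\mathcal{A}^{X_I}$ is the braid arrangement on the $n - \#I + 1$ equivalence classes obtained by collapsing $I$ to a single coordinate, so $c(\mathcal{A}^{X_I}) = (n - \#I + 1)!$. Multiplying gives the claimed exponent $(\#I - 2)!\,(n - \#I + 1)!$.

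The main obstacle is not the specialization but the rigorous formulation and invocation of the Aguiar--Mahajan multiplicity $m(X)$ in this two-sided form, together with its vanishing on reducible localizations --- this is what turns the set-partition sum into a subset product. Once that piece is in place, the values of $\beta$ and $c$ for braid arrangements are standard, and the identification of bases and exponents is purely bookkeeping.
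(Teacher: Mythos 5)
Your proof is correct and takes essentially the same route as the paper: invoke the Aguiar--Mahajan determinant formula for $\gamma$, then identify the contributing flats of $\mathcal{A}_n$ and their multiplicities. The only difference is that the paper outsources the second step to a prior reference of the author (\cite{Ra}, Lemmas 3.2--3.3), whereas you derive the multiplicities directly via $m(X)=\beta(\mathcal{A}_X)\cdot c(\mathcal{A}^X)$, the vanishing on reducible localizations, and the standard values $\beta(\mathcal{A}_{\#I})=(\#I-2)!$ and $c(\mathcal{A}^{X_I})=(n-\#I+1)!$ — a correct, more self-contained rendering of the same facts.
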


\begin{proof}
We first discuss about the general case of hyperplane arrangements. A flat of $\mathcal{A}$ is an intersection of hyperplanes in $\mathcal{A}$. Denote the set formed by the flats of $\mathcal{A}$ by $L_{\mathcal{A}}$. The weight of a flat $E \in L_{\mathcal{A}}$ is the monomial $\displaystyle \mathrm{b}_E := \prod_{\substack{H \in \mathcal{A} \\ E \subseteq H}} h_H^+ h_H^-$, and, if we choose a hyperplane $H$ containing $E$, the multiplicity $\beta_E$ of $E$ is half the number of chambers $C \in C_{\mathcal{A}}$ which have the property that $E$ is the minimal edge containing $\overline{C} \cap H$. Aguiar and Mahajan proved that \cite[Theorem~8.11]{AgMa} $$\det \gamma = \prod_{E \in L_{\mathcal{A}}} (1 - \mathrm{b}_E)^{\beta_E}.$$ 
Now, concerning $\mathcal{A}_n$, let $L_{\mathcal{A}_n}' = \{E \in L_{\mathcal{A}_n}\ |\ \beta_E \neq 0\}$. For a subset $I \subseteq [n]$ with $\#I \geq 2$, denote by $E_I$ the edge $\displaystyle \bigcap_{\{i,j\} \in \binom{I}{2}} H_{i,j}$. Randriamaro proved that \cite[Lemma 3.2, 3.3]{Ra} $$L_{\mathcal{A}_n}' = \big\{E_I\ \big|\ I \subseteq [n],\, |I| \geq 2\big\} \quad \text{and} \quad \beta_{E_I} = (\#I\,-2)!\,(n-\, \#I \,+1)!.$$
\end{proof}

\noindent Take a partition $\lambda = (p_1, \dots, p_k) \in \mathrm{Par}(n)$ of $n$. Denote by $\mathfrak{S}_{\lambda}$ the subgroup $\displaystyle \prod_{i \in [k]} \mathfrak{S}_{\lambda_i}$ of $\mathfrak{S}_n$, where $\mathfrak{S}_{\lambda_i}$ is the permutation group of the set $[p_i + p_{i-1} + \dots + p_1] \setminus [p_{i-1} + \dots + p_1]$.

\noindent Consider the multiset $I_{\lambda} = \{\overbrace{1, \dots, 1}^{\text{$p_1$ times}}, \overbrace{2, \dots, 2}^{\text{$p_2$ times}}, \dots, \overbrace{k, \dots, k}^{\text{$p_k$ times}}\}$. Denote by $\mathfrak{S}_{I_{\lambda}}$ the permutation set of the multiset $I_{\lambda}$. For $s \in [n]$, define $\dot{s} := i$ if $s \in [p_i + p_{i-1} + \dots + p_1] \setminus [p_{i-1} + \dots + p_1]$.

\noindent Let $\mathrm{p}: \mathfrak{S}_n \rightarrow \mathfrak{S}_{I_{\lambda}}$ be the projection
$\displaystyle \mathrm{p}(\sigma) := \dot{\sigma(1)}\, \dot{\sigma(2)} \, \dots \, \dot{\sigma(n)}$. For $\dot{\sigma} \in \mathfrak{S}_{I_{\lambda}}$, define the element $\displaystyle C_{\dot{\sigma}} := \sum_{\sigma \in \mathrm{p}^{-1}(\dot{\sigma})} C_{\sigma} \in M_{\mathcal{A}_n}$. Denote by $M_{\mathcal{A}_n}^{\lambda}$ the submodule $$M_{\mathcal{A}_n}^{\lambda} := \Big\{\sum_{\dot{\sigma} \in \mathfrak{S}_{I_{\lambda}}} x_{C_{\dot{\sigma}}} C_{\dot{\sigma}}\ \Big|\ x_{C_{\dot{\sigma}}} \in R_{\mathcal{A}_n}\Big\}.$$

\noindent For $s,t \in [n]$ with $s \neq t$, assign the variable $q_{\dot{s},\dot{t}}$ to the half-space $\{x \in \mathbb{R}^n\ |\ x_s < x_t\}$.

\begin{proposition}  \label{Rep}
Let $n \in \mathbb{N}^*$, and $\lambda \in \mathrm{Par}(n)$. Then, $\gamma_n(M_{\mathcal{A}_n}^{\lambda}) = M_{\mathcal{A}_n}^{\lambda}$.
\end{proposition}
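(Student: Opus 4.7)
The plan is to realize $M_{\mathcal{A}_n}^{\lambda}$ as the fixed-point submodule of a natural group action on $M_{\mathcal{A}_n}$, and to check that $\gamma_n$ commutes with that action.

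Concretely, I would let the Young subgroup $\mathfrak{S}_{\lambda} \subseteq \mathfrak{S}_n$ act on $M_{\mathcal{A}_n}$ by $\pi \cdot C_{\sigma} := C_{\pi \circ \sigma}$, extended $R_{\mathcal{A}_n}$-linearly. This is compatible with the ring $R_{\mathcal{A}_n}$ because, in the current labelling, the variables $q_{\dot{s},\dot{t}}$ depend only on the ``dotted'' indices, and every $\pi \in \mathfrak{S}_{\lambda}$ preserves each block $[p_i + \dots + p_1] \setminus [p_{i-1} + \dots + p_1]$ setwise and therefore preserves the dot. Since $\sigma, \sigma' \in \mathfrak{S}_n$ satisfy $\mathrm{p}(\sigma) = \mathrm{p}(\sigma')$ exactly when $\sigma'\sigma^{-1}$ fixes every block setwise, the fibres of $\mathrm{p}$ are precisely the cosets $\mathfrak{S}_{\lambda}\sigma$. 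Consequently $C_{\dot{\sigma}} = \sum_{\pi \in \mathfrak{S}_{\lambda}} C_{\pi\sigma}$ is the $\mathfrak{S}_{\lambda}$-orbit sum of any lift, and $M_{\mathcal{A}_n}^{\lambda}$ is exactly the submodule $(M_{\mathcal{A}_n})^{\mathfrak{S}_{\lambda}}$ of $\mathfrak{S}_{\lambda}$-invariants.

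The next step is to verify that $\gamma_n$ is $\mathfrak{S}_{\lambda}$-equivariant, which reduces to the symmetry
$$\mathrm{v}_n(C_{\pi\tau}, C_{\pi\sigma}) = \mathrm{v}_n(C_{\tau}, C_{\sigma}) \qquad \text{for every } \pi \in \mathfrak{S}_{\lambda}.$$
Reading off the definition of $\mathrm{v}_n$, the inversion set indexing the product is controlled by $(\pi\sigma)^{-1}(\pi\tau) = \sigma^{-1}\tau$ and is therefore unchanged, while each factor $q_{\dot{\pi\tau(i)},\,\dot{\pi\tau(j)}}$ equals $q_{\dot{\tau(i)},\,\dot{\tau(j)}}$ because $\pi$ preserves the dot. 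A reindexing of the defining sum for $\gamma_n(C_{\pi\tau})$ then yields $\gamma_n(\pi \cdot C_{\tau}) = \pi \cdot \gamma_n(C_{\tau})$, so $\gamma_n$ sends $\mathfrak{S}_{\lambda}$-invariants to $\mathfrak{S}_{\lambda}$-invariants; equivalently, $\gamma_n(M_{\mathcal{A}_n}^{\lambda}) \subseteq M_{\mathcal{A}_n}^{\lambda}$.

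The remaining point, which I expect to be the only subtle one, is promoting this inclusion to the stated equality $\gamma_n(M_{\mathcal{A}_n}^{\lambda}) = M_{\mathcal{A}_n}^{\lambda}$; the restriction of $\gamma_n$ need not be surjective over $R_{\mathcal{A}_n}$ itself (e.g.\ for $n=2$, $\lambda=(2)$ the restricted map is multiplication by $1+q_{1,1}$). I would handle this by invoking Proposition~\ref{Var}, which shows $\det\gamma_n \neq 0$, so that $\gamma_n$ restricted to $M_{\mathcal{A}_n}^{\lambda}$ is injective and becomes an isomorphism after inverting the determinantal factors---exactly what is needed for the downstream identification of $\mathbf{M}_I$ as the matrix of $\gamma_n$ restricted to the invariant submodule.
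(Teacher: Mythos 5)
Your argument and the paper's are mathematically identical: both rest on the symmetry $\mathrm{v}_n(C_{\pi\tau}, C_{\pi\sigma}) = \mathrm{v}_n(C_{\tau}, C_{\sigma})$ for $\pi \in \mathfrak{S}_{\lambda}$, which follows from $(\pi\sigma)^{-1}\pi\tau = \sigma^{-1}\tau$ together with the fact that $\pi$ preserves each dotted value. You phrase this as $\mathfrak{S}_{\lambda}$-equivariance of $\gamma_n$ and identify $M_{\mathcal{A}_n}^{\lambda}$ with the fixed submodule $(M_{\mathcal{A}_n})^{\mathfrak{S}_{\lambda}}$; the paper instead shows directly that $\mathrm{v}_n(C_{\dot{\tau}}, \cdot)$ is constant on the right cosets $\mathfrak{S}_{\lambda}\sigma$, so that $\gamma_n(C_{\dot{\tau}})$ is a combination of the orbit sums $C_{\dot{\sigma}}$. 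The reindexing $\varphi \mapsto \upsilon\varphi$ in the paper's chain of equalities is exactly an application of your symmetry with $\pi = \upsilon^{-1}$, so the two proofs differ only in packaging.

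Your closing remark is correct and in fact flags a genuine imprecision in the paper: as an equality of $R_{\mathcal{A}_n}$-modules, $\gamma_n(M_{\mathcal{A}_n}^{\lambda}) = M_{\mathcal{A}_n}^{\lambda}$ is false in general (your $n=2$, $\lambda=(2)$ computation, where the restricted map is multiplication by $1 + q_{1,1}$, is a valid counterexample), and the paper's own proof only establishes the inclusion $\gamma_n(M_{\mathcal{A}_n}^{\lambda}) \subseteq M_{\mathcal{A}_n}^{\lambda}$. That inclusion is all that is used downstream --- it guarantees $\mathbf{M}_I$ is the matrix of a well-defined restriction of $\gamma_n$, after which nonsingularity follows from Proposition~\ref{Var} --- so the cleanest fix is either to weaken the statement to stability under $\gamma_n$, or, as you propose, to read the equality over a localization in which $\det\gamma_n$ is inverted.
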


\begin{proof}
If $\sigma \in \mathfrak{S}_n$ such that $\mathrm{p}(\sigma) = \dot{\sigma} \in \mathfrak{S}_{I_{\lambda}}$, then $\mathrm{p}^{-1}(\dot{\sigma}) = \mathfrak{S}_{\lambda} \sigma$. Let $\upsilon \sigma \in \mathfrak{S}_{\lambda} \sigma$, and $\dot{\tau} \in \mathfrak{S}_{I_{\lambda}}$. If $\mathrm{p}(\tau) = \dot{\tau}$,
\begin{align*}
\mathrm{v}_n(C_{\dot{\tau}}, C_{\upsilon \sigma}) & = \sum_{\varphi \in \mathfrak{S}_{\lambda}} \mathrm{v}_n(C_{\varphi \tau}, C_{\upsilon \sigma}) \\
& = \sum_{\varphi \in \mathfrak{S}_{\lambda}} \prod_{\substack{\{i,j\} \, \in \, \binom{[n]}{2} \\ i \, < \, j \\ \sigma^{-1} \upsilon^{-1} \circ \varphi \tau(i) \, > \, \sigma^{-1} \upsilon^{-1} \circ \varphi \tau(j)}} q_{\dot{\varphi \tau(i)}, \dot{\varphi \tau(j)}} \\
& = \sum_{\varphi \in \mathfrak{S}_{\lambda}} \prod_{\substack{\{i,j\} \, \in \, \binom{[n]}{2} \\ i \, < \, j \\ \sigma^{-1} \upsilon^{-1} \circ \varphi \tau(i) \, > \, \sigma^{-1} \upsilon^{-1} \circ \varphi \tau(j)}} q_{\dot{\tau(i)}, \dot{\tau(j)}} \\
& = \sum_{\varphi \in \mathfrak{S}_{\lambda}} \prod_{\substack{\{i,j\} \, \in \, \binom{[n]}{2} \\ i \, < \, j \\ \sigma^{-1} \circ \varphi \tau(i) \, > \, \sigma^{-1} \circ \varphi \tau(j)}} q_{\dot{\tau(i)}, \dot{\tau(j)}} \\
& = \mathrm{v}_n(C_{\dot{\tau}}, C_{\sigma}).
\end{align*}
Hence, 
\begin{align*}
\gamma_n(C_{\dot{\tau}}) & = \sum_{\sigma \in \mathfrak{S}_n} \mathrm{v}_n(C_{\dot{\tau}}, C_{\sigma}) \, C_{\sigma} \\
& = \sum_{\dot{\sigma} \in \mathfrak{S}_{I_{\lambda}}} \sum_{\upsilon \in \mathfrak{S}_{\lambda}} \mathrm{v}_n(C_{\dot{\tau}}, C_{\upsilon \sigma}) \, C_{\upsilon \sigma} \\ 
& = \sum_{\dot{\sigma} \in \mathfrak{S}_{I_{\lambda}}} \sum_{\upsilon \in \mathfrak{S}_{\lambda}} \mathrm{v}_n(C_{\dot{\tau}}, C_{\sigma}) \, C_{\upsilon \sigma} \\
& = \sum_{\dot{\sigma} \in \mathfrak{S}_{I_{\lambda}}} \mathrm{v}_n(C_{\dot{\tau}}, C_{\sigma}) \sum_{\upsilon \in \mathfrak{S}_{\lambda}} C_{\upsilon \sigma} \\
& = \sum_{\dot{\sigma} \in \mathfrak{S}_{I_{\lambda}}} \mathrm{v}_n(C_{\dot{\tau}}, C_{\sigma}) \, C_{\dot{\sigma}}.
\end{align*} 
\end{proof}

\section{The Bra-Ket Product on $\mathbf{H}$}

\noindent We prove some useful properties of the map in Theorem~\ref{ThMa}. We particularly connect it to the bilinear form $\mathrm{v}_n$ defined in Section \ref{SeHy}.

\begin{lemma}  \label{H}
The vector space generated by our particle states is
$$\mathbf{H} = \Big\{ \sum_{i=1}^n \mu_i \mathtt{b}_i\ \Big|\ n \in \mathbb{N}^*,\, \mu_i \in \mathbb{C}[q_{i,j}],\, \mathtt{b}_i \in \mathbf{B}\Big\}.$$
\end{lemma}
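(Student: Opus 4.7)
The approach is straightforward normal ordering. Every element of $\mathbf{A}$ is by definition a $\mathbb{C}[q_{i,j}]$-linear combination of words in the letters $\mathtt{a}_i, \mathtt{a}_i^{\dag}$, so it suffices to show that for any such word $\mathtt{w}$, the vector $\mathtt{w}|0\rangle$ lies in the submodule spanned by $\mathbf{B}$.

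The plan is to rewrite $\mathtt{w}$ as a $\mathbb{C}[q_{i,j}]$-linear combination of normal-ordered words, that is, words of the form $\mathtt{a}_{j_1}^{\dag}\dots \mathtt{a}_{j_p}^{\dag}\,\mathtt{a}_{k_1}\dots \mathtt{a}_{k_r}$ with all creation operators to the left of all annihilation operators. To carry this out, I would induct on a complexity measure such as the number of inversions $\#\{(s,t)\ |\ s<t,\ \mathtt{w}_s = \mathtt{a}_i,\ \mathtt{w}_t = \mathtt{a}_j^{\dag}\}$, i.e.\ the number of adjacent pairs where an annihilation operator sits to the left of a creation operator. Whenever $\mathtt{w}$ contains a subword $\mathtt{a}_i \mathtt{a}_j^{\dag}$, the defining relation
$$\mathtt{a}_i \mathtt{a}_j^{\dag} = q_{i,j}\, \mathtt{a}_j^{\dag}\mathtt{a}_i + \delta_{i,j}$$
replaces $\mathtt{w}$ by the sum of two words, each with strictly fewer inversions (the $\delta_{i,j}$ term has the adjacent pair deleted; the other term has them swapped and the inversion removed). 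The induction terminates at words in which every $\mathtt{a}^{\dag}$ precedes every $\mathtt{a}$.

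Once $\mathtt{w}|0\rangle$ has been expressed as a linear combination of normal-ordered words applied to $|0\rangle$, I would apply the vacuum relation $\mathtt{a}_i |0\rangle = 0$: any normal-ordered word ending with at least one annihilation operator kills $|0\rangle$, whereas a normal-ordered word of the form $\mathtt{a}_{j_1}^{\dag}\dots\mathtt{a}_{j_p}^{\dag}$ applied to $|0\rangle$ is precisely an element of $\mathbf{B}$ (and the empty word yields $|0\rangle \in \mathbf{B}$). Therefore $\mathtt{w}|0\rangle$ lies in the $\mathbb{C}[q_{i,j}]$-span of $\mathbf{B}$, which proves the claimed equality of sets.

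There is no real obstacle here; the only point that requires minor care is confirming that the rewriting terminates and is well defined, which the strict decrease of the inversion statistic supplies. Note that the statement is about spanning, not about linear independence of $\mathbf{B}$, so no further argument (for instance, via the nonsingularity of $\mathbf{M}_I$) is needed at this stage.
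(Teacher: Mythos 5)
Your argument is correct and is essentially the same as the paper's: both use the defining relation to push annihilation operators past creation operators and then kill them with the vacuum relation $\mathtt{a}_i|0\rangle = 0$. The only cosmetic difference is that the paper works one annihilation operator at a time, giving the closed formula
$\mathtt{a}_i\, \mathtt{a}_{j_1}^{\dag} \dots \mathtt{a}_{j_t}^{\dag}\, |0\rangle  = \sum_{u \in [t],\, j_u=i} q_{i,j_1} \dots q_{i,j_{u-1}} \, \mathtt{a}_{j_1}^{\dag} \dots \widehat{\mathtt{a}_{j_u}^{\dag}} \dots \mathtt{a}_{j_t}^{\dag} |0\rangle$
for commuting $\mathtt{a}_i$ through an entire block of creation operators at once, and then recurses on the number of annihilation operators; whereas you normal-order the whole word at once and justify termination via the strictly decreasing inversion count. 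Both inductions terminate for the same reason and yield the same conclusion.
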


\begin{proof}
Let $i \in \mathbb{N}^*$. We have,
\begin{align*}
\mathtt{a}_i\, \mathtt{a}_{j_1}^{\dag} \dots \mathtt{a}_{j_t}^{\dag} =\ & q_{i,j_1} \dots q_{i,j_t} \, \mathtt{a}_{j_1}^{\dag} \dots \mathtt{a}_{j_t}^{\dag}\, \mathtt{a}_i \\
& + \sum_{\substack{u \in [t] \\ j_u=i}} q_{i,j_1} \dots q_{i,j_{u-1}} \, \mathtt{a}_{j_1}^{\dag} \dots \widehat{\mathtt{a}_{j_u}^{\dag}} \dots \mathtt{a}_{j_t}^{\dag},
\end{align*}
where the hat over the $u^{\text{th}}$ term of the product indicates that this term is omitted. So
$$\mathtt{a}_i\, \mathtt{a}_{j_1}^{\dag} \dots \mathtt{a}_{j_t}^{\dag}\, |0\rangle  = \sum_{\substack{u \in [t] \\ j_u=i}} q_{i,j_1} \dots q_{i,j_{u-1}} \, \mathtt{a}_{j_1}^{\dag} \dots \widehat{\mathtt{a}_{j_u}^{\dag}} \dots \mathtt{a}_{j_t}^{\dag} |0\rangle.$$
Thus, one can recursively remove every annihilation operator $\mathtt{a}_i$ of an element $\mathtt{a}|0\rangle \in \mathbf{H}$.
\end{proof}

\begin{lemma} \label{zero}
Let $(i_1, \dots, i_s) \in (\mathbb{N}^*)^s$ and $(j_1, \dots, j_t) \in (\mathbb{N}^*)^t$. If, as multisets, $\{i_1, \dots, i_s\}$ is different from $\{j_1, \dots, j_t\}$, then $\langle 0|\, \mathtt{a}_{i_s} \dots \mathtt{a}_{i_1} \, \mathtt{a}_{j_1}^{\dag} \dots \mathtt{a}_{j_t}^{\dag} \,|0 \rangle = 0$. 
\end{lemma}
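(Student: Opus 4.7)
The plan is to induct on $s+t$, using the expansion of the rightmost annihilation operator $\mathtt{a}_{i_1}$ through the block of creation operators that was already established in the proof of Lemma~\ref{H}. The key observation is that one such push simultaneously decreases $s$ and $t$ by one and removes a common element from both multisets, so the assumed multiset inequality is preserved.

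Base cases. Since the multisets differ, at least one of $s,t$ is nonzero. If $s=0$ and $t\ge 1$, then $\langle 0|\,\mathtt{a}_{j_1}^{\dag}\dots\mathtt{a}_{j_t}^{\dag}\,|0\rangle$ vanishes because $\langle 0|\,\mathtt{a}_{j_1}^{\dag}=(\mathtt{a}_{j_1}|0\rangle)^{\dag}=0$; the symmetric case $t=0$, $s\ge 1$ follows from $\mathtt{a}_{i_s}|0\rangle=0$.

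Inductive step. Assume $s,t\ge 1$. Substituting the identity
\begin{align*}
\mathtt{a}_{i_1}\, \mathtt{a}_{j_1}^{\dag} \dots \mathtt{a}_{j_t}^{\dag}\, |0\rangle \;=\; \sum_{\substack{u \in [t] \\ j_u=i_1}} q_{i_1,j_1} \dots q_{i_1,j_{u-1}} \, \mathtt{a}_{j_1}^{\dag} \dots \widehat{\mathtt{a}_{j_u}^{\dag}} \dots \mathtt{a}_{j_t}^{\dag}\, |0\rangle
\end{align*}
from the proof of Lemma~\ref{H} into the bra-ket expresses it as a $\mathbb{C}[q_{i,j}]$-linear combination of bra-kets of the same shape with $s-1$ annihilation and $t-1$ creation operators. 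If $i_1\notin\{j_1,\dots,j_t\}$, the sum is empty and we conclude immediately. Otherwise, for every index $u$ with $j_u=i_1$, the reduced multisets $\{i_2,\dots,i_s\}$ and $\{j_1,\dots,\widehat{j_u},\dots,j_t\}$ still differ: if they coincided, then adding back a copy of $i_1$ to each side would yield $\{i_1,\dots,i_s\}=\{j_1,\dots,j_t\}$, contradicting the hypothesis. The inductive hypothesis then kills each summand.

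There is no real obstacle beyond bookkeeping: the only point to verify carefully is that removing a common element preserves multiset inequality, which is immediate. All the algebra is already packaged in the commutation identity used in Lemma~\ref{H}, so the proof is a straightforward induction.
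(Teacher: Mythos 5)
Your proof is correct, and it takes a genuinely different route from the paper's. The paper gives a direct, non-inductive argument: it locates the first ``unmatched'' index --- the smallest $v$ with $i_v \notin \{j_1,\dots,j_t\}\setminus\{i_1,\dots,i_{v-1}\}$ (or, symmetrically, the smallest such $u$ for the $j$'s) --- and asserts that normal-ordering collapses the whole word to $P\,\mathtt{a}_{i_v}\dots\mathtt{a}_{i_1} + Q\,\mathtt{a}_{i_v}$ for some $P,Q\in\mathbf{A}$, so every resulting term ends with an annihilation operator acting on $|0\rangle$ (respectively $\mathtt{a}_{j_u}^{\dag}$ next to $\langle 0|$). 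You instead run a clean induction on $s+t$, pushing only $\mathtt{a}_{i_1}$ through the creation block via the explicit contraction formula already derived in the proof of Lemma~\ref{H}, noting that each surviving term has one annihilation and one matched creation operator deleted, which strips a common element from both multisets and hence preserves the inequality. Your version is more self-contained and the bookkeeping is fully localized in a single displayed identity; the paper's is shorter once one is comfortable with the one-shot structural decomposition, but it leaves more for the reader to verify.

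One small slip to fix: in the base case $t=0$, $s\ge 1$, the operator that actually kills the vacuum is the innermost one, $\mathtt{a}_{i_1}$, since the word acts on $|0\rangle$ from the right. You wrote $\mathtt{a}_{i_s}|0\rangle = 0$, which is a true statement but not the one being applied. This does not affect the argument.
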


\begin{proof}
Suppose that $v$ is the smallest integer in $[s]$ such that $i_v \notin \{j_1, \dots, j_t\} \setminus \{i_1, \dots, i_{v-1}\}.$ Then
$$\mathtt{a}_{i_s} \dots \mathtt{a}_{i_1} \, \mathtt{a}_{j_1}^{\dag} \dots \mathtt{a}_{j_t}^{\dag} = P\, \mathtt{a}_{i_v} \dots \mathtt{a}_{i_1} + Q\, \mathtt{a}_{i_v}\quad \text{with}\quad P,Q \in \mathbf{A}.$$
We deduce that $\mathtt{a}_{i_s} \dots \mathtt{a}_{i_1} \, \mathtt{a}_{j_1}^{\dag} \dots \mathtt{a}_{j_t}^{\dag}\, |0\rangle = P\, \mathtt{a}_{i_v} \dots \mathtt{a}_{i_1}\, |0\rangle + Q \, \mathtt{a}_{i_v}\, |0\rangle = 0$.\\
Similarly, suppose that $u$ is the smallest integer in $[t]$ such that $j_u$ does not belong to the multiset $\{i_1, \dots, i_s\} \setminus \{j_1, \dots, j_{u-1}\}.$ Then
$$\mathtt{a}_{i_s} \dots \mathtt{a}_{i_1} \, \mathtt{a}_{j_1}^{\dag} \dots \mathtt{a}_{j_t}^{\dag} = \mathtt{a}_{j_1}^{\dag} \dots \mathtt{a}_{j_u}^{\dag} \, P' + \mathtt{a}_{j_u}^{\dag}\, Q' \quad \text{with} \quad P',Q' \in \mathbf{A}.$$
And $\langle 0|\, \mathtt{a}_{i_s} \dots \mathtt{a}_{i_1} \, \mathtt{a}_{j_1}^{\dag} \dots \mathtt{a}_{j_t}^{\dag} = \langle 0|\, \mathtt{a}_{j_1}^{\dag} \dots \mathtt{a}_{j_u}^{\dag}\, P' +\langle 0|\, \mathtt{a}_{j_u}^{\dag}\, Q' = 0$.
\end{proof}

\noindent Therefore, we just need to investigate the product $\langle 0|\, \mathtt{a}_{i_n} \dots \mathtt{a}_{i_1} \, \mathtt{a}_{j_1}^{\dag} \dots \mathtt{a}_{j_n}^{\dag} \,|0 \rangle$ where $(j_1, \dots, j_n)$ is a multiset permutation of $(i_1, \dots, i_n)$.

\begin{lemma}  \label{Ch}
Let $\sigma, \tau \in \mathfrak{S}_n$, and $C_{\sigma}, C_{\tau} \in C_{\mathcal{A}_n}$ their associated chambers. Then,
$$\langle 0|\, \mathtt{a}_{\sigma(n)} \dots \mathtt{a}_{\sigma(1)} \, \mathtt{a}_{\tau(1)}^{\dag} \dots \mathtt{a}_{\tau(n)}^{\dag} \,|0 \rangle = \mathrm{v}_n(C_{\sigma}, C_{\tau}).$$
\end{lemma}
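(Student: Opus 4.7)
The plan is to induct on $n$. The base case $n=1$ is immediate: both $\sigma$ and $\tau$ are the identity of $\mathfrak{S}_1$, so the left side reduces via the commutation relation $\mathtt{a}_1 \mathtt{a}_1^{\dag} = q_{1,1}\mathtt{a}_1^{\dag}\mathtt{a}_1 + 1$ to $\langle 0|0\rangle = 1$, while $\mathrm{v}_1$ is an empty product.

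For the inductive step, I push the rightmost annihilation operator $\mathtt{a}_{\sigma(1)}$ through the block of creation operators, using precisely the identity derived in the proof of Lemma~\ref{H}. Because $\tau$ is a permutation of $[n]$, there is a unique index $u := \tau^{-1}\sigma(1)$ with $\tau(u) = \sigma(1)$, so that identity collapses to a single summand:
$$\mathtt{a}_{\sigma(1)}\, \mathtt{a}_{\tau(1)}^{\dag} \cdots \mathtt{a}_{\tau(n)}^{\dag}\, |0\rangle = \Big(\prod_{k=1}^{u-1} q_{\sigma(1), \tau(k)}\Big)\, \mathtt{a}_{\tau(1)}^{\dag} \cdots \widehat{\mathtt{a}_{\tau(u)}^{\dag}} \cdots \mathtt{a}_{\tau(n)}^{\dag}\, |0\rangle.$$
Substituting this into the bra-ket leaves a residual expression of the same shape on the alphabet $[n] \setminus \{\sigma(1)\}$, with $\tau$ replaced by $\tau' := (\tau(1), \dots, \widehat{\tau(u)}, \dots, \tau(n))$ and $\sigma$ by $\sigma' := (\sigma(2), \dots, \sigma(n))$. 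Relabelling $[n]\setminus\{\sigma(1)\}$ bijectively with $[n-1]$ (which preserves every commutation relation among the surviving generators, as the $q_{i,j}$ are defined for all pairs in $\mathbb{N}^*$), one obtains a bra-ket of exactly the form covered by the inductive hypothesis, whose value is $\mathrm{v}_{n-1}(C_{\sigma'}, C_{\tau'})$.

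It remains to verify that the scalar prefactor multiplied by $\mathrm{v}_{n-1}(C_{\sigma'}, C_{\tau'})$ recovers $\mathrm{v}_n(C_\sigma, C_\tau)$. Rewriting $\tau(k) = \sigma(j)$ for the unique $j \in [n]$ so determined, the prefactor equals
$$\prod_{\substack{j \in \{2,\dots,n\} \\ \tau^{-1}\sigma(j)\, <\, \tau^{-1}\sigma(1)}} q_{\sigma(1), \sigma(j)},$$
which is precisely the contribution of the pairs $(1, j)$ to the product defining $\mathrm{v}_n(C_\sigma, C_\tau)$. Since deletion of one entry from $\tau$ preserves the relative order of the remaining entries, the inversions of $(\tau')^{-1}\sigma'$ biject with those inversions of $\tau^{-1}\sigma$ that do not involve position $1$, and under the relabelling the associated $q$-factors match termwise. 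Multiplying these contributions closes the induction.

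I expect the only real difficulty to be notational: keeping three index systems straight (positions in $\sigma$, positions in $\tau$, and values in $[n]$), and confirming that deletion of the entry $\sigma(1)$ from $\tau$ cleanly splits the inversion set of $\tau^{-1}\sigma$ into the $i=1$ pairs and the remaining ones. The substantive algebraic input is absorbed entirely by the single commutation identity already established in the proof of Lemma~\ref{H}.
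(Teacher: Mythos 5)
Your proof is correct and takes essentially the same approach as the paper: both push the annihilation operators through the block of creation operators using the identity from the proof of Lemma~\ref{H} and collect the resulting $q$-factors. The paper states the outcome of pushing all $n$ annihilation operators through in a single displayed equality, while you formalize the same computation as an induction on $n$, peeling off $\mathtt{a}_{\sigma(1)}$ one step at a time.
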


\begin{proof}
We have
\begin{align*}
\langle 0|\, \mathtt{a}_{\sigma(n)} \dots \mathtt{a}_{\sigma(1)} \, \mathtt{a}_{\tau(1)}^{\dag} \dots \mathtt{a}_{\tau(n)}^{\dag} \,|0 \rangle & = \prod_{s \in [n]}\ \prod_{\substack{t \in [n] \setminus [s] \\ \tau^{-1} \circ \sigma(s) \, > \, \tau^{-1} \circ \sigma(t)}}q_{\sigma(s),\sigma(t)} \\
& = \prod_{\substack{\{s,t\} \, \in \, \binom{[n]}{2} \\ s \, < \, t \\ \tau^{-1} \circ \sigma(s) \, > \, \tau^{-1} \circ \sigma(t)}} q_{\sigma(s),\sigma(t)} \\
& = \mathrm{v}_n(C_{\sigma}, C_{\tau}).
\end{align*}
\end{proof}

\noindent For $s,t \in [n]$ with $s \neq t$, assign the variable $q_{\dot{s},\dot{t}}$ to the half-space $\{x \in \mathbb{R}^n\ |\ x_s < x_t\}$.

\begin{lemma} \label{CoSym}
Let $\lambda \in \mathrm{Par(n)}$, and $\dot{\sigma}, \dot{\tau} \in \mathfrak{S}_{I_{\lambda}}$. Then, for every $\tau \in \mathrm{p}^{-1}(\dot{\tau})$, $$\langle 0|\, \mathtt{a}_{\dot{\sigma}(n)} \dots \mathtt{a}_{\dot{\sigma}(1)} \, \mathtt{a}_{\dot{\tau}(1)}^{\dag} \dots \mathtt{a}_{\dot{\tau}(n)}^{\dag} \,|0 \rangle = \mathrm{v}_n(C_{\dot{\sigma}}, C_{\tau}).$$
\end{lemma}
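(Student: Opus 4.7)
The plan is to induct on $n$, using the commutation identity
$$\mathtt{a}_c\,\mathtt{a}_{c_1}^{\dag}\cdots\mathtt{a}_{c_n}^{\dag}\,|0\rangle = \sum_{u\in[n],\,c_u=c} q_{c,c_1}\cdots q_{c,c_{u-1}}\,\mathtt{a}_{c_1}^{\dag}\cdots\widehat{\mathtt{a}_{c_u}^{\dag}}\cdots\mathtt{a}_{c_n}^{\dag}\,|0\rangle$$
derived in the proof of Lemma~\ref{H}, together with the definition $C_{\dot\sigma}=\sum_{\sigma^{\ast}\in\mathrm{p}^{-1}(\dot\sigma)} C_{\sigma^{\ast}}$, which gives $\mathrm{v}_n(C_{\dot\sigma},C_\tau) = \sum_{\sigma^{\ast}\in\mathrm{p}^{-1}(\dot\sigma)}\mathrm{v}_n(C_{\sigma^{\ast}},C_\tau)$. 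The base $n=1$ is immediate: both sides reduce to $1$.

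For the inductive step, fix $\tau\in\mathrm{p}^{-1}(\dot\tau)$ and apply the identity with $c=\dot\sigma(1)$, $c_u=\dot\tau(u)$. The left-hand side of the lemma becomes
$$\sum_{u:\,\dot\tau(u)=\dot\sigma(1)}q_{\dot\sigma(1),\dot\tau(1)}\cdots q_{\dot\sigma(1),\dot\tau(u-1)}\ \langle 0|\,\mathtt{a}_{\dot\sigma(n)}\cdots\mathtt{a}_{\dot\sigma(2)}\,\mathtt{a}_{\dot\tau(1)}^{\dag}\cdots\widehat{\mathtt{a}_{\dot\tau(u)}^{\dag}}\cdots\mathtt{a}_{\dot\tau(n)}^{\dag}\,|0\rangle.$$
For each $u$, the inner bracket falls within the inductive hypothesis applied to the multiset obtained from $I_\lambda$ by removing one copy of the color $\dot\sigma(1)$; under the order-preserving relabelling $[n]\setminus\{\tau(u)\}\to[n-1]$ it equals $\mathrm{v}_{n-1}(C_{\dot\sigma'},C_{\tau_u'})$, where $\dot\sigma'=(\dot\sigma(2),\dots,\dot\sigma(n))$ and $\tau_u'$ is $\tau$ with its $u$-th entry deleted and renumbered.

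To match this with $\mathrm{v}_n(C_{\dot\sigma},C_\tau)$, I partition $\mathrm{p}^{-1}(\dot\sigma)$ by the value $u=\tau^{-1}(\sigma^{\ast}(1))$, which necessarily satisfies $\dot\tau(u)=\dot\sigma(1)$, and I split each product $\mathrm{v}_n(C_{\sigma^{\ast}},C_\tau)=\prod_{i<j,\,\tau^{-1}\sigma^{\ast}(i)>\tau^{-1}\sigma^{\ast}(j)}q_{\dot\sigma(i),\dot\sigma(j)}$ into contributions with $i=1$ and with $i\ge 2$. The $i=1$ contributions $\prod_{j\ge 2,\,\tau^{-1}\sigma^{\ast}(j)<u}q_{\dot\sigma(1),\dot\sigma(j)}$ identify with the prefactor $q_{\dot\sigma(1),\dot\tau(1)}\cdots q_{\dot\sigma(1),\dot\tau(u-1)}$ via the bijection $j\leftrightarrow v=\tau^{-1}\sigma^{\ast}(j)\in\{1,\dots,u-1\}$ together with the equality $\dot\sigma(j)=\dot{\sigma^{\ast}(j)}=\dot{\tau(v)}=\dot\tau(v)$. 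The $i\ge 2$ contributions translate, under the order-preserving identification of $[n]\setminus\{\tau(u)\}$ with $[n-1]$, into $\mathrm{v}_{n-1}(C_{\sigma^{\ast\prime\prime}},C_{\tau_u'})$ for the corresponding sub-permutation $\sigma^{\ast\prime\prime}\in\mathrm{p}^{-1}(\dot\sigma')$, because that order-preserving bijection simultaneously preserves the strict inequality comparing $\tau^{-1}\sigma^{\ast}(i)$ and $\tau^{-1}\sigma^{\ast}(j)$ and transports the color map. Summing $\sigma^{\ast\prime\prime}$ inside each $u$-slice reproduces $\mathrm{v}_{n-1}(C_{\dot\sigma'},C_{\tau_u'})$, matching the previous display term by term and closing the induction.

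The independence of the right-hand side from the choice of representative $\tau\in\mathrm{p}^{-1}(\dot\tau)=\mathfrak{S}_\lambda\tau$ built into the phrasing of the lemma then follows either \emph{a posteriori} from the proved equality, or directly from the reindexing $\sigma^{\ast}\mapsto\upsilon^{-1}\sigma^{\ast}$ on $\mathfrak{S}_\lambda$ used in the proof of Proposition~\ref{Rep}, which yields $\mathrm{v}_n(C_{\dot\sigma},C_{\upsilon\tau})=\mathrm{v}_n(C_{\dot\sigma},C_\tau)$ for every $\upsilon\in\mathfrak{S}_\lambda$. The main technical difficulty I anticipate is the combinatorial bookkeeping of the relabelling step, in particular verifying that the order-preserving bijection $[n]\setminus\{\tau(u)\}\to[n-1]$ transports the inversion data of $\tau^{-1}\sigma^{\ast}$ and the induced color map simultaneously, so that the residual $i\ge 2$ product is exactly $\mathrm{v}_{n-1}(C_{\sigma^{\ast\prime\prime}},C_{\tau_u'})$ with no extra or missing factors.
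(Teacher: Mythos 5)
Your induction is correct, but it takes a genuinely different route from the paper's. The paper writes the entire bra-ket product at once as a sum over pairing permutations $\upsilon$ with $\dot\sigma = \dot\tau\circ\upsilon$, then exploits the coset identity $\{\upsilon\ |\ \dot\sigma = \dot\tau\circ\upsilon\} = \tau^{-1}\mathfrak{S}_\lambda\sigma$ to reindex the sum over $\varphi\in\mathfrak{S}_\lambda$ and read off $\mathrm{v}_n(C_{\dot\sigma},C_\tau) = \sum_{\varphi\in\mathfrak{S}_\lambda}\mathrm{v}_n(C_{\varphi\sigma},C_\tau)$ directly, with no induction and no relabelling. You instead peel off $\mathtt{a}_{\dot\sigma(1)}$ via the recursion from Lemma~\ref{H}, invoke the inductive hypothesis on the $(n-1)$-operator brackets, and match against the partition of $\mathrm{p}^{-1}(\dot\sigma)$ by $u = \tau^{-1}(\sigma^{\ast}(1))$. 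The trade-off is exactly the one you flag: your argument is more elementary and self-contained (the paper simply asserts the full sum-over-pairings expansion as a known fact, whereas your induction produces it), but it costs you the order-preserving relabelling $[n]\setminus\{\tau(u)\}\to[n-1]$ whose compatibility with both the inversion data of $\tau^{-1}\sigma^{\ast}$ and the color map must be checked, and it costs you an implicit generalization of the lemma from $I_\lambda$ (partition form) to arbitrary multisets, since deleting one copy of $\dot\sigma(1)$ need not leave a multiset whose multiplicities are weakly decreasing. Neither issue is fatal — the relabelling check is routine (as you correctly argue) and the lemma does hold for arbitrary multisets up to renaming colors — but the paper's coset reindexing neatly sidesteps both.
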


\begin{proof}
We have
\begin{align*}
\langle 0|\, \mathtt{a}_{\dot{\sigma}(n)} \dots \mathtt{a}_{\dot{\sigma}(1)} \, \mathtt{a}_{\dot{\tau}(1)}^{\dag} \dots \mathtt{a}_{\dot{\tau}(n)}^{\dag} \,|0 \rangle \ 
& = \sum_{\substack{\upsilon \in \mathfrak{S}_n \\ \dot{\sigma} = \dot{\tau} \circ \upsilon}} \prod_{s \in [n]}\ \prod_{\substack{t \in [n] \setminus [s] \\ \upsilon(s) \, > \, \upsilon(t)}}q_{\dot{\sigma}(s), \dot{\tau} \circ \upsilon(t)}  \\
& = \sum_{\substack{\upsilon \in \mathfrak{S}_n \\ \dot{\sigma} = \dot{\tau} \circ \upsilon}} \prod_{\substack{\{s,t\} \, \in \, \binom{[n]}{2} \\ s \,< \, t \\ \upsilon(s) \, > \, \upsilon(t)}} q_{\dot{\sigma}(s), \dot{\tau} \circ \upsilon(t)}.
\end{align*}
For every $\sigma \in \mathrm{p}^{-1}(\dot{\sigma})$, and $\tau \in \mathrm{p}^{-1}(\dot{\tau})$, we have, on one side, $$\dot{\sigma} = \dot{\tau} \circ \upsilon \, \Longleftrightarrow \, \mathfrak{S}_{\lambda} \sigma = \mathfrak{S}_{\lambda} \tau \upsilon \, \Longleftrightarrow \, \upsilon \in \tau^{-1} \mathfrak{S}_{\lambda} \sigma.$$
On the other side, if $\mathrm{id}$ is the identity permutation, there exists $\varphi \in \mathfrak{S}_{\lambda}$ such that $\upsilon = \tau^{-1} \varphi \sigma$, and
$$\dot{\tau} \circ \upsilon(t) = \dot{\tau} \circ \tau^{-1} \varphi \sigma(t) = \dot{\mathrm{id}} \circ \varphi \sigma(t) = \dot{\mathrm{id}} \circ \sigma(t) = \dot{\sigma}(t).$$
Then,
\begin{align*}
\langle 0|\, \mathtt{a}_{\dot{\sigma}(n)} \dots \mathtt{a}_{\dot{\sigma}(1)} \, \mathtt{a}_{\dot{\tau}(1)}^{\dag} \dots \mathtt{a}_{\dot{\tau}(n)}^{\dag} \,|0 \rangle \ 
& = \sum_{\varphi \in \mathfrak{S}_{\lambda}} \prod_{\substack{\{s,t\} \, \in \, \binom{[n]}{2} \\ s \,< \, t \\ \tau^{-1} \varphi \sigma(s) \, > \, \tau^{-1} \varphi \sigma(t)}} q_{\dot{\sigma}(s), \dot{\sigma}(t)} \\
& = \mathrm{v}_n(C_{\dot{\sigma}}, C_{\tau})
\end{align*}
\end{proof}

\section{The Conjecture of Zagier} \label{SecZ}

\noindent To prove the extended Zagier's conjecture, we first have to return to the general case of hyperplane arrangements. The set $F_{\mathcal{A}}$ is a monoid with product $FG$ defined, for $F, G \in F_{\mathcal{A}}$, by $$\epsilon_H(FG) = \begin{cases}
\epsilon_H(F) & \text{if}\ \epsilon_H(F) \neq 0, \\ \epsilon_H(G) & \text{otherwise}. \end{cases}$$
It is also a meet-semilattice with partial order $\preceq$ defined, for $F, G \in F_{\mathcal{A}}$, by
$$F \preceq G \ \Longleftrightarrow \ \epsilon_H(F) = \epsilon_H(G) \ \, \text{whenever} \ \, \epsilon_H(F) \neq 0.$$ 

\noindent Denote by $O$ the face in $F_{\mathcal{A}}$ such that, for every $H \in \mathcal{A}$, $\epsilon_H(O) = 0$. The \textbf{rank} of a face $F \in F_{\mathcal{A}}$ is $\displaystyle \mathrm{rk}\,F := \dim \bigcap_{\substack{H \in \mathcal{A} \\ F \subseteq H}}H - \dim O$. The \textbf{weight} of a face $F \in F_{\mathcal{A}}$ is the monomial $\displaystyle \mathrm{b}_F := \prod_{\substack{H \in \mathcal{A} \\ F \subseteq H}} h_H^+ h_H^-$. Clearly, $\mathrm{b}_F = 0$ if $F \in C_{\mathcal{A}}$.

\noindent A \textbf{nested face} is a pair $(F,G)$ of faces in $F_{\mathcal{A}}$ such that $F \prec G$. For a nested face $(F,G)$, define the set $F_{\mathcal{A}}^{(F,G)} := \{K \in F_{\mathcal{A}}\ |\ F \preceq K \preceq G\}$, and the polynomial $\displaystyle \Delta_{F,G} := \prod_{K \in F_{\mathcal{A}}^{(F,G)}}(1 - \mathrm{b}_K)$. 

\noindent We prove a variant result of Aguiar and Mahajan \cite[Proposition~8.13]{AgMa}.

\begin{proposition}  \label{PrExZ}
Let $\mathcal{A}$ be a hyperplane arrangement in $\mathbb{R}^n$. Each entry of $\gamma^{-1}: M_{\mathcal{A}} \rightarrow M_{\mathcal{A}}$ is an element of $\displaystyle \bigcup_{C \in C_{\mathcal{A}}} \frac{R_{\mathcal{A}}}{\Delta_{O,C}}$.
\end{proposition}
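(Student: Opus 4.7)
The plan is to adapt the argument of Aguiar and Mahajan \cite[Proposition~8.13]{AgMa}, which establishes a similar formula with more general denominators of the form $\Delta_{F,G}$ for nested faces $(F,G)$; the present statement is essentially the specialization $F = O$. The key tool is the Eulerian family of idempotents in the face algebra of $\mathcal{A}$, which simultaneously diagonalizes the map $\gamma$.

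First I would recall the face-algebra setup: the face monoid $F_{\mathcal{A}}$ acts on $M_{\mathcal{A}}$ by the Tits product $F \cdot C = FC$, and there is a complete orthogonal family $\{\mathrm{E}_F\}_{F \in F_{\mathcal{A}}}$ of Eulerian idempotents that decomposes the identity in the associated algebra. The map $\gamma$ is central in this algebra and acts on the eigenspace $\mathrm{E}_F \cdot M_{\mathcal{A}}$ as multiplication by the scalar $1 - \mathrm{b}_F$; in particular $\gamma$ is invertible over the field of fractions of $R_{\mathcal{A}}$, with
\[
\gamma^{-1} = \sum_{F \in F_{\mathcal{A}}} (1 - \mathrm{b}_F)^{-1}\, \mathrm{E}_F.
\]

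Next, I would expand each idempotent $\mathrm{E}_F$ explicitly as an alternating sum over faces $K \succeq F$, substitute into the above formula, and evaluate at a chamber $D$. Re-expressing the result in the chamber basis then produces, for each entry $(\gamma^{-1})_{C,D}$, a finite sum of rational functions whose denominators are products $\prod_{K \in \mathcal{K}} (1 - \mathrm{b}_K)$ indexed by chains $\mathcal{K} : O \preceq K_1 \preceq \dots \preceq K_r$ in $F_{\mathcal{A}}$. Since every chain has a maximal element that lies $\preceq$ some chamber $C^*$, the whole chain sits inside $F_{\mathcal{A}}^{(O, C^*)}$, so $\prod_{K \in \mathcal{K}}(1 - \mathrm{b}_K)$ divides $\Delta_{O, C^*}$. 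Because only finitely many chains contribute to the given entry, one may clear the sum to a single common denominator $\Delta_{O, C^*}$ for a suitable chamber $C^*$ depending on $C$ and $D$, placing $(\gamma^{-1})_{C,D}$ in $R_{\mathcal{A}}/\Delta_{O, C^*}$ as required.

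The main obstacle will be producing the explicit rational expansion of $\gamma^{-1}$ with controlled denominators. Aguiar and Mahajan's Proposition~8.13 is proved in the broader setting of nested faces $(F,G)$, and what must be checked here is that, when the projectors $\mathrm{E}_F$ are applied to $M_{\mathcal{A}}$ and re-expressed via the chamber basis, the relevant chains can always be rooted at $O$; equivalently, the combinatorial compatibility between the face action and the Aguiar--Mahajan bilinear form $\mathrm{v}$ must be verified so that the general $\Delta_{F,G}$ collapses to $\Delta_{O,C^*}$. Once this combinatorial step is established, combining the fractions and bounding the denominators is routine.
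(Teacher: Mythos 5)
The paper's proof and yours take fundamentally different routes, and yours rests on a false premise, so let me point out the gap concretely.

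The paper adapts the \emph{inductive} argument from the proof of \cite[Proposition~8.13]{AgMa}: for a nested pair $(A,D)$ with $D$ a chamber, it defines $\mathrm{m}(A,D)=\sum_{AC=D}\mathrm{v}(D,C)\,C$, shows by backward induction on $A$ (from chambers down to the central face $O$) that $\mathrm{m}(A,D)$ lies in the span of the $\gamma(C)$'s with coefficients in $R_{\mathcal{A}}/\Delta_{A,D}$, and reads off the conclusion from $\mathrm{m}(O,D)=D$. No spectral decomposition and no idempotents are used.

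Your approach hinges on the claim that the Eulerian idempotents $\mathrm{E}_F$ diagonalize $\gamma$, with $\gamma$ central in the face algebra and acting on $\mathrm{E}_F\cdot M_{\mathcal{A}}$ by the scalar $1-\mathrm{b}_F$, so that $\gamma^{-1}=\sum_F(1-\mathrm{b}_F)^{-1}\mathrm{E}_F$. This is not true. Take the arrangement of a single hyperplane $H$ with chambers $H^+,H^-$. Then $\gamma$ has matrix $\begin{pmatrix}1 & h_H^-\\ h_H^+ & 1\end{pmatrix}$ with eigenvalues $1\pm\sqrt{h_H^+h_H^-}$, which are not of the form $1-\mathrm{b}_F$ (and not even in the field of fractions of $R_{\mathcal{A}}$). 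The determinant does factor as $1-h_H^+h_H^-$, matching Aguiar--Mahajan's product formula, but the factors are not eigenvalues. Moreover $\gamma$ does not commute with the face action: in the same example $\gamma(H^+\cdot H^-)=H^+ + h_H^+H^-$ while $H^+\cdot\gamma(H^-)=(1+h_H^-)H^+$, so $\gamma$ is not central, and indeed no choice of coefficients $a_F$ makes $\gamma$ the action of a face-algebra element $\sum_F a_F F$ when $h_H^+\neq h_H^-$. So the whole spectral setup collapses, and the subsequent chain/common-denominator argument has nothing to hang on. Independently, even granting some expansion of $\gamma^{-1}$ with denominators indexed by chains, the step of collapsing finitely many chains (each bounded by its own chamber) to a single chamber $C^*$ per matrix entry is asserted but not established; the paper sidesteps this because its induction produces the denominator $\Delta_{O,D}$ directly, one column $D$ at a time. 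To repair the proposal you would need to abandon the diagonalization and work instead with the recursive identity for $\mathrm{m}(A,D)$ that Aguiar and Mahajan actually prove, which is precisely what the paper does.
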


\begin{proof}
As $\det \gamma$ is a polynomial in $R_{\mathcal{A}}$ with constant term $1$, $\gamma$ is consequently invertible.

\noindent For a chamber $D \in C_{\mathcal{A}}$, and a nested face $(A,D)$, let
$\displaystyle \mathrm{m}(A,D) := \sum_{\substack{C \in C_{\mathcal{A}} \\ AC = D}} \mathrm{v}(D,C)\, C$. We prove by backward induction that $\displaystyle \mathrm{m}(A,D) = \sum_{C \in C_{\mathcal{A}}} x_C \gamma(C)$ with $\displaystyle x_C \in \frac{R_{\mathcal{A}}}{\Delta_{A,D}}$.

\noindent We have $\displaystyle \mathrm{m}(D,D) = \gamma(D)$. The opposite of a face $F \in F_{\mathcal{A}}$ is the face $\tilde{F}$ such that, for every $H \in \mathcal{A}$, $\epsilon_H(\tilde{F}) = -\epsilon_H(F)$. One can read in the proof of \cite[Proposition~8.13]{AgMa} that
$$\displaystyle \mathrm{m}(A,D) - (-1)^{\mathrm{rk}\,D - \mathrm{rk}\,A} \, \mathrm{v}(D,A\tilde{D}) \, \mathrm{m}(A,A\tilde{D}) = \sum_{F \in F_{\mathcal{A}}^{(A,D)} \setminus \{A\}} (-1)^{\mathrm{rk}\,F - \mathrm{rk}\,A +1} \mathrm{m}(F,D).$$
By induction hypothesis, for every $C \in C_{\mathcal{A}}$, there exists $\displaystyle a_C \in \bigcup_{F \in F_{\mathcal{A}}^{(A,D)} \setminus \{A\}} \frac{R_{\mathcal{A}}}{\Delta_{F,D}}$, such that $$\sum_{F \in F_{\mathcal{A}}^{(A,D)} \setminus \{A\}} (-1)^{\mathrm{rk}\,F - \mathrm{rk}\,A +1} \mathrm{m}(F,D) = \sum_{C \in C_{\mathcal{A}}} a_C \gamma(C).$$
Remark that, for every $F \in F_{\mathcal{A}}^{(A,D)}$, we have $\mathrm{b}_F = \mathrm{b}_{A\tilde{F}}$, which means that $$\bigcup_{F \in F_{\mathcal{A}}^{(A,A\tilde{D})} \setminus \{A\}} \frac{R_{\mathcal{A}}}{\Delta_{F, A\tilde{D}}} = \bigcup_{F \in F_{\mathcal{A}}^{(A,D)} \setminus \{A\}} \frac{R_{\mathcal{A}}}{\Delta_{F, D}}.$$
Since $A \leq A\tilde{D}$ and $A(\widetilde{A\tilde{D}}) = D$, replacing $D$ with $A\tilde{D}$, there exists also $\displaystyle e_C \in \bigcup_{F \in F_{\mathcal{A}}^{(A,D)} \setminus \{A\}} \frac{R_{\mathcal{A}}}{\Delta_{F, D}}$, such that, for every $C \in C_{\mathcal{A}}$, $\displaystyle \mathrm{m}(A,A\tilde{D}) - (-1)^{\mathrm{rk}\,A\tilde{D} - \mathrm{rk}\,A} \, \mathrm{v}(A\tilde{D},D) \, \mathrm{m}(A,D) = \sum_{C \in C_{\mathcal{A}}} e_C \gamma(C)$.
Therefore, 
\begin{align*}
\mathrm{m}(A,D) - (-1)^{\mathrm{rk}\,D - \mathrm{rk}\,A} \, \mathrm{v}(D,A\tilde{D}) \, \mathrm{m}(A,A\tilde{D}) & = \sum_{C \in C_{\mathcal{A}}} a_C \gamma(C) \\
\mathrm{m}(A,D) - \mathrm{v}(D,A\tilde{D}) \, \mathrm{v}(A\tilde{D},D) \, \mathrm{m}(A,D) & = \sum_{C \in C_{\mathcal{A}}} \big(a_C + (-1)^{\mathrm{rk}\,D - \mathrm{rk}\,A} \, \mathrm{v}(D,A\tilde{D}) \, e_C \big) \gamma(C) \\
\mathrm{m}(A,D) & = \sum_{C \in C_{\mathcal{A}}} \frac{a_C + (-1)^{\mathrm{rk}\,D - \mathrm{rk}\,A} \, \mathrm{v}(D,A\tilde{D}) \, e_C}{1 - \mathrm{b}_A} \gamma(C),
\end{align*}
with $\displaystyle \, \frac{a_C + (-1)^{\mathrm{rk}\,D - \mathrm{rk}\,A} \, \mathrm{v}(D,A\tilde{D}) \, e_C}{1 - \mathrm{b}_A} \, \in \, \bigcup_{F \in F_{\mathcal{A}}^{(A,D)}} \frac{R_{\mathcal{A}}}{\Delta_{F, D}} \, = \, \frac{R_{\mathcal{A}}}{\Delta_{A, D}}$.

\noindent For every $D \in C_{\mathcal{A}}$, we have $\mathrm{m}(O,D) = D$. Thus $\displaystyle D = \sum_{C \in C_{\mathcal{A}}} x_C \gamma(C)$ with $\displaystyle x_C \in \frac{R_{\mathcal{A}}}{\Delta_{O, D}}$, and $\displaystyle \gamma^{-1}(D) = \sum_{C \in C_{\mathcal{A}}} x_C \, C$. Finally, each entry of $\gamma^{-1}$ is an element of $\displaystyle \bigcup_{C \in C_{\mathcal{A}}} \frac{R_{\mathcal{A}}}{\Delta_{O, C}}$.
\end{proof}

\noindent We can deduce the extended Zagier's conjecture.

\begin{corollary}
Let $n \geq 2$, and suppose that $q_{i,j} = q_{j,i} = q$. Then each entry of $\gamma_n^{-1}$ is an element of $\displaystyle \frac{\mathbb{Z}[q]}{\displaystyle \prod_{i \in [n-1]}\big(1 - q^{i^2 + i}\big)^{n-i}}$. 
\end{corollary}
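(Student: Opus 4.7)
The plan is to specialize Proposition~\ref{PrExZ} to the braid arrangement $\mathcal{A}_n$ under the substitution $q_{i,j}=q_{j,i}=q$, and then to identify the combinatorics of $\Delta_{O,C_\sigma}$ explicitly. First, I would identify the interval $F_{\mathcal{A}_n}^{(O,C_\sigma)}$ with the compositions of $n$: the faces below $C_\sigma$ are exactly the ordered set partitions obtained by merging consecutive singletons of $(\{\sigma(1)\},\dots,\{\sigma(n)\})$ into blocks, so each composition $(p_1,\dots,p_k)$ of $n$ arises once. Under the substitution every half-space variable equals $q$, and the face $K$ associated to such a composition is contained in the hyperplanes $H_{r,s}$ for which $r,s$ lie in a common block, of which there are $\sum_\ell\binom{p_\ell}{2}$; consequently $\mathrm{b}_K = q^{\sum_\ell p_\ell(p_\ell-1)}$.

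Next I single out the \emph{hook compositions}: for each $i\in[n-1]$, those with one block of size $i+1$ and $n-i-1$ singleton blocks. Each such composition has weight exponent $i(i+1)=i^2+i$, and there are precisely $n-i$ of them, one per position of the large block. Collecting their contributions to $\Delta_{O,C_\sigma}$ yields exactly $\prod_{i\in[n-1]}(1-q^{i^2+i})^{n-i}$, the target denominator $D$, which is therefore a divisor of $\Delta_{O,C_\sigma}$. Proposition~\ref{PrExZ} then places each entry of $\gamma_n^{-1}$ in $R_{\mathcal{A}_n}/\Delta_{O,C_\sigma}$.

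The main obstacle is the contribution of \emph{non-hook} compositions, namely those with two or more blocks of size $\geq 2$; these first appear at $n=4$ with the composition $(2,2)$, which produces an extraneous factor $1-q^4$ in $\Delta_{O,C_\sigma}$ that is not a divisor of $D$. To finish, I would argue that such extraneous factors cancel against the numerators of the entries of $\gamma_n^{-1}$, reducing the effective denominator to $D$. I expect this cancellation to be forced by the $\mathfrak{S}_n$-equivariance of $\gamma_n$ under the constant specialization: non-hook faces occur in $\mathfrak{S}_n$-orbits on which $\gamma_n$ acts by a common scalar, and a sharper tracking of which factors genuinely survive in the recursion $\mathrm{m}(A,D)=\cdots/(1-\mathrm{b}_A)$ inside the proof of Proposition~\ref{PrExZ} should collapse these contributions, leaving only the hook product $D$.
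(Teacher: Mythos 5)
Your plan follows the same route as the paper: specialize Proposition~\ref{PrExZ} to $\mathcal{A}_n$, identify $F_{\mathcal{A}_n}^{(O_n,C_\sigma)}$ with compositions of $n$, and evaluate the weights $\mathrm{b}_K$. You have also correctly located the sore spot: for $n\geq 4$ the interval contains non-hook compositions such as $(2,2)$, whose weights are \emph{not} of the form $q^{i^2+i}$ for $i\in[n-1]$, so $\Delta_{O_n,C_\sigma}$ is strictly larger than the target $D=\prod_{i\in[n-1]}(1-q^{i^2+i})^{n-i}$. What you should know is that the paper's own proof glosses over exactly this: it asserts that every $i$-dimensional face of $F_{\mathcal{A}_n}^{(O_n,C_\sigma)}$ is a hook face (one merged block of $n-i+1$ consecutive coordinates) and that there are exactly $i$ of them. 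That count is correct only for $n\le 3$. Already for $n=4$ there are three $2$-dimensional faces in the interval, corresponding to $(3,1),(1,3),(2,2)$, not two, and $\mathrm{b}_{(2,2)}=q^{4}$ whereas the paper's formula would give $q^{6}$. Hence the displayed identity $\Delta_{O_n,C_\sigma}=\prod_{i\in[n-1]}(1-q^{(n-i)^2+(n-i)})^{i}$ is false for $n\geq 4$, and the paper's argument does not actually establish the corollary.

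One small slip in your write-up: $1-q^4$ does in fact divide $D$ when $n=4$, since $1-q^4=(1-q^2)(1+q^2)$ and $1+q^2\mid 1-q^{12}$. But this is beside the point: the real issue is that $\Delta_{O_n,C_\sigma}=D\cdot(1-q^4)$ is a strict polynomial multiple of $D$, so Proposition~\ref{PrExZ} only places the entries of $\gamma_n^{-1}$ in $\mathbb{Z}[q]/\Delta_{O_n,C_\sigma}$, a weaker conclusion than the corollary, and the gap only widens for larger $n$. Your proposed remedy, that the extraneous non-hook factors cancel via $\mathfrak{S}_n$-equivariance or via a sharper bookkeeping of which $(1-\mathrm{b}_A)$ actually occur in the recursion of Proposition~\ref{PrExZ}, is exactly what would be needed, but you do not carry it out, and it is genuinely the hard content here; it is in essence the extended Zagier conjecture itself, proved by Meljanac--Svrtan and by Duchamp et al.\ by other means. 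As written, both your proposal and the paper's proof leave this step unjustified.
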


\begin{proof} Let $\displaystyle O_n = \bigcap_{H \in \mathcal{A}_n} H$, $C_{\sigma} \in C_{\mathcal{A}_n}$, and $i,j \in [n-1]$ such that $i \geq j$. A $i$-dimensional face $F_i \in F_{\mathcal{A}_n}^{(O_n, C_{\sigma})}$ has the form
$$F_i = \{x \in \mathbb{R}^n\ |\ x_{\sigma(1)} < \dots < \overbrace{x_{\sigma(j)} = x_{\sigma(j+1)} = \dots = x_{\sigma(j+n-i+1)}}^{\text{$n-i+1$ variables}} < \dots < x_{\sigma(n)}\}.$$ 
Thus $$\displaystyle \mathrm{b}_{F_i} = \prod_{\{k,l\} \, \in \, \binom{I_i}{2}} q_{k,l} q_{l,k} \quad \text{with} \quad I_i = \big\{\sigma(j), \sigma(j+1), \dots, \sigma(j+n-i+1)\big\}.$$
If $q = q_{k,l} = q_{l,k}$, then $\mathrm{b}_{F_i} = q^{(n-i)^2 + (n-i)}$. Since $F_{\mathcal{A}_n}^{(O_n, C_{\sigma})}$ contains exactly $i$ $i$-dimensional face, we get $\displaystyle \Delta_{O_n, C_{\sigma}} = \prod_{i \in [n-1]}\big(1 - q^{(n-i)^2 + (n-i)}\big)^i$. Finally, we deduce from Proposition \ref{PrExZ} that each entry of $\gamma_n^{-1}$ is an element of $\displaystyle \frac{\mathbb{Z}[q]}{\displaystyle \prod_{i \in [n-1]}\big(1 - q^{(n-i)^2 + (n-i)}\big)^i}$.
\end{proof}

\newpage

\bibliographystyle{abbrvnat}

\end{document}